\numberwithin{equation}{section}
\newtheorem{Theorem}{Theorem}[section]
\newtheorem*{Theorem*}{Theorem}
\newtheorem{prop}[Theorem]{Proposition}
 { \theoremstyle{definition}

 }
\newcommand{\Hom}{\operatorname{Hom}}
\newcommand{\End}{\operatorname{End}}
\DeclareSymbolFont{script}{U}{eus}{m}{n}
\DeclareMathSymbol{\Wedge}{0}{script}{"5E}
\newcommand{\bthree}[3]{\begin{picture}(48,11)
\put(4,1.5){\makebox(0,0){$\times$}}
\put(24,1.5){\makebox(0,0){$\bullet$}}
\put(44,1.5){\makebox(0,0){$\bullet$}}
\put(4,1.5){\line(1,0){20}}
\put(23,2.5){\line(1,0){20}}
\put(23,0.5){\line(1,0){20}}
\put(34,1.5){\makebox(0,0){$\rangle$}}
\put(4,10){\makebox(0,0){$\vphantom{(}\scriptstyle #1$}}
\put(24,10){\makebox(0,0){$\vphantom{(}\scriptstyle #2$}}
\put(44,10){\makebox(0,0){$\vphantom{(}\scriptstyle #3$}}
\end{picture}}
\newcommand{\xox}[3]{\begin{picture}(40,11)
\put(4,1.5){\makebox(0,0){$\times$}}
\put(20,1.5){\makebox(0,0){$\bullet$}}
\put(36,1.5){\makebox(0,0){$\times$}}
\put(4,1.5){\line(1,0){32}}
\put(4,8){\makebox(0,0){$\scriptstyle #1$}}
\put(20,8){\makebox(0,0){$\scriptstyle #2$}}
\put(36,8){\makebox(0,0){$\scriptstyle #3$}}
\end{picture}}
\newcommand{\btwo}[2]{\begin{picture}(28,11)
\put(4,1.5){\makebox(0,0){$\bullet$}}
\put(24,1.5){\makebox(0,0){$\times$}}
\put(23,2.5){\line(-1,0){20}}
\put(23,0.5){\line(-1,0){20}}
\put(14,1.5){\makebox(0,0){$\langle$}}
\put(4,8){\makebox(0,0){$\scriptstyle #1$}}
\put(24,8){\makebox(0,0){$\scriptstyle #2$}}
\end{picture}}
\newcommand{\gtwo}[2]{\begin{picture}(28,10)
\put(4,1.1){\makebox(0,0){$\bullet$}}
\put(24,1.5){\makebox(0,0){$\times$}}
\put(4,3.5){\line(1,0){18}}
\put(4,1.5){\line(1,0){20}}
\put(4,-.5){\line(1,0){18}}
\put(14,1.5){\makebox(0,0){$\langle$}}
\put(4,8){\makebox(0,0){$\scriptstyle #1$}}
\put(24,8){\makebox(0,0){$\scriptstyle #2$}}
\end{picture}}
\begin{document}
\allowdisplaybreaks

\newcommand{\arXivNumber}{2201.13048}

\renewcommand{\thefootnote}{}

\renewcommand{\PaperNumber}{031}

\FirstPageHeading

\ShortArticleName{Spinors in Five-Dimensional Contact Geometry}

\ArticleName{Spinors in Five-Dimensional Contact Geometry\footnote{This paper is a~contribution to the Special Issue on Twistors from Geometry to Physics in honour of Roger Penrose. The~full collection is available at \href{https://www.emis.de/journals/SIGMA/Penrose.html}{https://www.emis.de/journals/SIGMA/Penrose.html}}}

\Author{Michael EASTWOOD~$^{\rm a}$ and Timothy MOY~$^{\rm b}$}

\AuthorNameForHeading{M.~Eastwood and T.~Moy}

\Address{$^{\rm a)}$~School of Mathematical Sciences, University of Adelaide, SA 5005, Australia}
\EmailD{\href{mailto:meastwoo@gmail.com}{meastwoo@gmail.com}}

\Address{$^{\rm b)}$~Clare College, University of Cambridge, CB2 1TL, England, UK}
\EmailD{\href{mailto:tjahm2@cam.ac.uk}{tjahm2@cam.ac.uk}}

\ArticleDates{Received January 31, 2022, in final form April 13, 2022; Published online April 16, 2022}

\Abstract{We use classical (Penrose) two-component spinors to set up the differential geometry of two parabolic contact structures in five dimensions, namely $G_2$ contact geometry and Legendrean contact geometry. The key players in these two geometries are invariantly defined directional derivatives defined only in the contact directions. We explain how to define them and their usage in constructing basic invariants such as the harmonic curvature, the obstruction to being locally flat from the parabolic viewpoint. As an application, we calculate the invariant torsion of the $G_2$ contact structure on the configuration space of a~flying saucer (always a five-dimensional contact manifold).}

\Keywords{spinors; contact geometry; parabolic geometry}

\Classification{53B05; 53D10; 58J10}

\begin{flushright}
\begin{minipage}{60mm}
\it Dedicated to Roger Penrose\\
 on the occasion of his 90th birthday
\end{minipage}
\end{flushright}

\renewcommand{\thefootnote}{\arabic{footnote}}
\setcounter{footnote}{0}

\section{Introduction}
Two-component spinors are widely used in four-dimensional Lorentzian geometry.
The seminal books `Spinors and space-time'~\cite{OT,NT} are devoted to such
usage. At a very basic level, two-component spinors arise via the 2--1 covering
of Lie groups
\[{\mathrm{SL}}(2,{\mathbb{C}})\longrightarrow{\mathrm{SO}}^\uparrow(3,1),\]
where ${\mathrm{SO}}^\uparrow(3,1)$ is the identity-connected component of the
Lorentz group. Similarly, in three dimensions, the 2--1 covering
\[{\mathrm{SL}}(2,{\mathbb{R}})\longrightarrow{\mathrm{SO}}^\uparrow(2,1)\]
is responsible for the utility of two-component spinors in three dimensions
(as in Section~\ref{3d_conformal}).

At a very basic level, the two-component spinors in this article arise via
inclusions
\[{\mathrm{SL}}(2,{\mathbb{R}})\hookrightarrow{\mathrm{Sp}}(4,{\mathbb{R}}),\]
where ${\mathrm{Sp}}(4,{\mathbb{R}})$ is the subgroup of
${\mathrm{SL}}(4,{\mathbb{R}})$ consisting of matrices that preserve a fixed
non\-degenerate skew form on ${\mathbb{R}}^4$. There are two such inclusions:
\begin{itemize}\itemsep=0pt
\item by writing ${\mathbb{R}}^4=\bigodot^3{\mathbb{R}}^2$, where $\bigodot$
denotes symmetric tensor product,
\item by writing ${\mathbb{R}}^4={\mathbb{R}}^2\oplus{\mathbb{R}}^2$,
\end{itemize}
giving rise to $G_2$ contact geometry (as in Section~\ref{G2}) and Legendrean
contact geometry (as in Section~\ref{CL}), respectively. These two geometries are
defined on a five-dimensional contact mani\-fold~$M$ as extra structure on the
contact distribution~$H\subset TM$. A contact form $\alpha$ is a $1$-form so
that $H=\ker\alpha$. It gives rise to a nondegenerate skew form, the Levi
form, namely ${\rm d}\alpha|_H$. The extra spin structures in Sections~\ref{G2} and~\ref{CL} are required to be compatible with the Levi form.

Another important theme in \cite{OT,NT} is conformal geometry. It is concerned
with what happens if the (Lorentzian) metric is replaced by a smooth positive
multiple of itself. The resulting formul{\ae}, for
example~(\ref{change_of_spin_connection}), fit very well with spinors. For the
contact geometries in this article, the corresponding freedom is in choosing a
contact form~$\alpha$. The resulting formul{\ae}, for
example~(\ref{change_of_G2_spin_connection}), (\ref{full_transformation_law}),
(\ref{full_barred_transformation_law}), also fit very well with spinors. These
`conformal' structures are examples of {\em parabolic geometries}~\cite{CS}.
In particular, {\em invariant differential operators} play a key part in
parabolic constructions.

Ideally, one would like to approach the natural differential geometric calculus
on these various geometries via invariant differential operators. More
specifically, for a chosen `scale' (a metric in the conformal class or a choice
of contact form) one expects a canonical (partial) connection on all the
natural irreducible vector bundles. This expectation follows from the
\v{C}ap-Slov\'ak theory of {\em Weyl structures and scales} in parabolic
geometry~(see \cite{CSweyl} or \cite[Section~5.1]{CS}). In conformal geometry, it is
just the Levi-Civita connection: see Sections~\ref{LC} and~\ref{conf} for details.
For parabolic contact structures, one expects partial connections defined only
in the contact directions. We shall see in Sections~\ref{G2} and~\ref{CL} that a
suitable collection of invariant operators is, indeed, sufficient for these
purposes. For the five-dimensional Legendrean contact structures
in Section~\ref{CL}, these operators can easily be found (within the Rumin complex,
explained in Section~\ref{rumin}). The two key invariant operators~(\ref{basic_invariant_operators}) in~$G_2$ contact geometry remain somewhat
mysterious: it is shown in Section~\ref{G2} that these operators are, indeed,
invariant but we have not been able to find a~suitable shortcut to their
construction. Nevertheless, we are able to construct, in Section~\ref{saucers}, the
general~$G_2$ contact geometry from a suitable Legendrean contact structure
and, thereby, calculate its basic spinor invariant (a certain septic).

Through this article we shall use Penrose's abstract index notation and other
conventions from~\cite{OT}. Also, to ease the notational burden, we shall not
carefully distinguish between bundles and sections thereof, for example writing
$\omega_a\in\Wedge^1$ instead of $\omega_a\in\Gamma\big({\Wedge}^1\big)$ or even
$\omega_a\in\Gamma\big(M,\Wedge^1\big)$, for a $1$-form~$\omega_a$. Especially as this
article is concerned with {\em local} differential geometry, this should
cause no confusion.

\section{The Levi-Civita connection}\label{LC}
On a general smooth manifold, the exterior derivative and the Lie derivative
are defined independently of local co\"ordinates (and there is little else with
this property~\cite{KMS}). Both of these operations can be defined in terms of
an arbitrary torsion-free affine connection. For the exterior derivative on
$1$-forms, we have (following the conventions of~\cite[equation~(4.3.14)]{OT})
\[\omega_b\longmapsto\nabla_{[a}\omega_{b]}.\]
For the Lie derivative on covariant $2$-tensors, we have
\begin{equation}\label{Lie}
\phi_{bc}\longmapsto({\mathcal{L}}_X\phi)_{bc}\equiv
X^a\nabla_a\phi_{bc}+(\nabla_bX^a)\phi_{ac}+(\nabla_cX^a)\phi_{ba},
\end{equation}
for a vector field~$X^a$ and, irrespective of the usual interpretation of
${\mathcal{L}}_X$ in terms of the flow of~$X^a$, it easy to check that this
expression does not depend on the choice of~$\nabla_a$. It is convenient to
regard the right hand side of~(\ref{Lie}), let's say for a symmetric
tensor~$\phi_{ab}$, as an invariantly defined differential {\em pairing}
\[\textstyle TM\times\bigodot^2\!\Wedge^1\longrightarrow
\bigodot^2\!\Wedge^1.\]
In particular, if $g_{ab}$ is a semi-Riemannian metric, that is to say a
nondegenerate symmetric form, then the Lie derivative of~$g_{ab}$, with a
convenient factor of~$\frac12$ thrown in, can be regarded as a~canonically
defined linear differential operator
\begin{equation}\label{un}
\textstyle TM\ni
X^a\longmapsto
\frac12{\mathcal{L}}_Xg_{bc}\in\bigodot^2\!\Wedge^1.\end{equation}
Of course,
the tensor $g_{ab}$ also defines an isomorphism $TM\cong\Wedge^1$ by
$X^a\mapsto g_{ab}X^b$, and so we have obtained an invariantly defined linear
differential operator
\begin{equation}\label{deux}
\textstyle\Wedge^1\cong TM\longrightarrow\bigodot^2\!\Wedge^1.\end{equation}
In combination with the exterior derivative ${\rm d}\colon \Wedge^1\to\Wedge^2$, we have
obtained
\begin{equation}\label{trois}
\textstyle\Wedge^1\longrightarrow\bigodot^2\!\Wedge^1\oplus\Wedge^2=
\Wedge^1\otimes\Wedge^1\end{equation}
and we claim that this is the Levi-Civita connection defined by~$g_{ab}$. This
is easy to check: if we use the Levi-Civita connection in~(\ref{Lie}), then
\begin{align*}\tfrac12{\mathcal{L}}_Xg_{ab}+\nabla_{[a}X_{b]}
&= \tfrac12\big(X^c\nabla_cg_{ab}+(\nabla_aX^c)g_{cb}+(\nabla_bX^c)g_{ac}\big) +\nabla_{[a}X_{b]}\\
&= \nabla_{(a}X_{b)}+\nabla_{[a}X_{b]}=\nabla_aX_b,
\end{align*}
as required. But we can play the moves (\ref{un}), (\ref{deux}), and
(\ref{trois}) to {\em define} the Levi-Civita connection. An advantage of
this viewpoint is that it is easily modified to define other connections (and
partial connections), as we shall soon see.

\section{Conformal differential geometry}\label{conf}
In semi-Riemannian conformal differential geometry, instead of a
metric~$g_{ab}$, we are given only a~{\em conformal class} of metrics. A
convenient way of expressing this is to say that we are given~$\eta_{ab}$, a
nondegenerate section of $\bigodot^2\!\Wedge^1\otimes L^2$ for some line
bundle~$L$. Thus, if $\sigma$ is a non-vanishing section of~$L$, then
$g_{ab}\equiv\sigma^{-2}\eta_{ab}$ is a genuine metric. We shall refer to
$\sigma$ as a {\em scale} and, if we choose a different section
$\widehat\sigma=\Omega^{-1}\sigma$ for some nowhere vanishing
function~$\Omega$, then we encounter a {\em rescaled} metric
$\widehat{g}_{ab}\equiv\widehat\sigma^{-2}\eta_{ab}=\Omega^2g_{ab}$. For a~given conformal structure~$\eta_{ab}$, let us define~$\eta^{ab}$, a~section of
$\bigodot^2\!TM\otimes L^{-2}$, by $\eta_{ac}\eta^{bc}=\delta_a{}^b$, where
$\delta_a{}^b$ is the invariant (Kronecker delta) pairing
$TM\otimes\Wedge^1\to\Wedge^0$.

On an $n$-dimensional oriented conformal manifold, we may attempt to normalise
a section $\epsilon_{ab\dots d}$ of the bundle $\Wedge^n$, compatible
with the orientation, by insisting that
\begin{equation}\label{normalise}
\epsilon_{ab\dots d}\epsilon_{ef\dots h}\eta^{ae}\eta^{bf}\cdots\eta^{dh}
\equiv n! ,\end{equation}
just as we may do on an $n$-dimensional oriented Riemannian manifold to
normalise and define the volume form. The problem with (\ref{normalise}) is
that the left hand side takes values in the line bundle~$L^{-2n}$. To remedy
this problem, we may insist that~$\epsilon_{ab\dots d}$ take values in
$\Wedge^n\otimes L^n$. The normalisation (\ref{normalise}) now makes good
sense and $\epsilon_{ab\dots d}$ is uniquely determined. Equivalently, we may
view $\epsilon_{ab\dots d}$ as providing a canonical identification
\[L^{-n}=\Wedge^n\qquad
\mbox{given by}\quad\phi\mapsto\phi\epsilon_{ab\dots d}.\]
It it usual to write $L\equiv\Wedge^0[1]$ and refer to sections of
$L^w\equiv\Wedge^0[w]$ as {\em conformal densities of weight}~$w$. In the
presence of a scale $\sigma$, defining a metric $g_{ab}=\sigma^{-2}\eta_{ab}$,
the normalisation (\ref{normalise}) reads
\[(\sigma^n\epsilon_{ab\dots d})(\sigma^n\epsilon_{ef\dots h})
g^{ae}g^{bf}\cdots g^{dh}=n!\]
so $\sigma^n\epsilon_{ab\dots d}$ is the usual volume form for the
metric~$g_{ab}$. This is consistent with the scale $\sigma$ trivialising
all the density bundles $\Wedge^0[w]$. To summarise, in the presence of a
scale~$\sigma$, a conformal density of weight $w$ can be regarded as an ordinary
function but if we change to a new scale $\widehat\sigma=\Omega^{-1}\sigma$,
then the same density is represented by a new function $\widehat{f}=\Omega^wf$.

Once we have decided that $\Wedge^0[1]=L=(\Wedge^n)^{-1/n}$, the Lie
derivative ${\mathcal{L}}_X\colon \Wedge^1\to\Wedge^1$ induces invariantly
defined differential pairings $TM\times B\to B$ for all conformally weighted
tensor bundles~$B$ and, in particular,
\[\textstyle TM\times\bigodot^2\!\Wedge^1[2]\to\bigodot^2\!\Wedge^1[2].\]
Recall that a conformal structure is a nondegenerate section $\eta_{ab}$
of this bundle $\bigodot^2\!\Wedge^1[2]$
and hence we obtain a conformally invariant first-order linear differential
operator
\begin{equation}\label{tricky}\textstyle
TM\ni X^a\longmapsto{\mathcal{L}}_X\eta_{ab}\in\bigodot^2\!\Wedge^1[2].
\end{equation}
But recall that $\epsilon_{ab\dots d}$ is the canonical section of
$\Wedge^n[n]=\Wedge^0$ corresponding to the constant function~$1$. It follows
that ${\mathcal{L}}_X\epsilon_{ab\dots d}=0$ for any vector field $X^a$ and
therefore, from~(\ref{normalise}), that $\eta^{ab}{\mathcal{L}}_X\eta_{ab}=0$.
In addition, we may use the conformal metric $\eta_{ab}$ to lower indices, at
the expense of a conformal weight so that $TM=\Wedge^1[2]$. The Lie derivative
in (\ref{tricky}) thus yields a conformal invariant differential operator
\[\textstyle\Wedge^1[2]\longrightarrow\bigodot_\circ^2\!\Wedge^1[2],\]
where $\circ$ denotes the trace-free part (a manifestly conformally invariant
notion). If $\phi_{b\dots d}$ is an $(n-1)$-form then
\[\epsilon_{ab\dots d}\eta^{be}\cdots\eta^{dg}\phi_{e\dots g}\]
is a $1$-form of conformal weight $n-2(n-1)=2-n$. In other words, the bundles
$\Wedge^{n-1}$ and $\Wedge^1[2-n]$ are canonically isomorphic. Similarly, we
have $\Wedge^n=\Wedge^0[-n]$ and so the exterior derivative
${\rm d}\colon \Wedge^{n-1}\to\Wedge^n$ may be viewed as an invariant differential operator
\[\Wedge^1[2-n]\to\Wedge^0[-n].\]
Together with the exterior derivative ${\rm d}\colon \Wedge^1\to\Wedge^2$, we now have
three conformally invariant linear differential operators defined on variously
weighted $1$-forms, namely
\begin{equation}\label{all_three}\textstyle
\Wedge^1[2]\to\bigodot_\circ^2\!\Wedge^1[2],
\qquad\Wedge^1[2-n]\to\Wedge^0[-n],\qquad\Wedge^1\to\Wedge^2.\end{equation}
As in Section~\ref{LC}, we are now in a position to define a preferred
connection in the presence of a scale $\sigma\in\Gamma(\Wedge^0[1])$.
Specifically, we may drop all the weights in (\ref{all_three}) and combine
them to obtain
\[\textstyle\Wedge^1\xrightarrow{\,\nabla\,}\Wedge^1\otimes\Wedge^1
=\bigodot_\circ^2\!\Wedge^1\oplus\Wedge^0\oplus\Wedge^2.\]
The three bundles on the right constitute the orthogonal decomposition of
$\Wedge^1\otimes\Wedge^1$ into irreducibles with respect to the metric
$g_{ab}=\sigma^{-2}\eta_{ab}$. Of course, it is easily verified that we have
found the Levi-Civita connection for $g_{ab}$ but there are two advantages of
this particular route. One is that we are able to read off the change in the
Levi-Civita connection under conformal rescaling
$\sigma\mapsto\widehat\sigma=\Omega^{-1}\sigma$. The other is that this route
may be employed in other {\em parabolic geometries}. Regarding the
first advantage, another way of pinning down the connection defined by
(\ref{all_three}) and a scale $\sigma\in\Gamma\big({\Wedge}^0[1]\big)$, is to require
that $\nabla_a\sigma=0$. This is consistent with the connections induced on
the line bundles $\Wedge^0[w]$ in the sense that, for
$\phi\in\Gamma\big({\Wedge}^0[w]\big)$, we have $\nabla\phi=\sigma^w{\rm d}(\sigma^{-w}\phi)$.
In particular, for a new scale $\widehat\sigma=\Omega^{-1}\sigma$ and
$\phi\in\Gamma\big({\Wedge}^0[w]\big)$, we have
\[\widehat\nabla\phi=\widehat\sigma^w {\rm d}\big(\widehat\sigma^{-w}\phi\big)=
\Omega^{-w}\sigma^w{\rm d}\big(\Omega^w\sigma^{-w}\phi\big)
=\sigma^w{\rm d}\big(\sigma^{-w}\phi\big)+w\Omega^{-1}({\rm d}\Omega)\phi
=\nabla\phi+w\Upsilon\phi,\]
where $\Upsilon\equiv\Omega^{-1}{\rm d}\Omega$. It follows that, regarding the
change in the Levi-Civita connection on a~$1$-form~$\omega_b$, we have
\[\widehat\nabla_a\omega_b
=\nabla_a\omega_b-\Upsilon_a\omega_b-\Upsilon_b\omega_a
+\Upsilon^c\omega_cg_{ab}\]
since then, if $\omega_b$ has conformal weight~$w$, we deduce that
\[\widehat\nabla_a\omega_b
=\nabla_a\omega_b+(w-1)\Upsilon_a\omega_b-\Upsilon_b\omega_a
+\Upsilon^c\omega_cg_{ab},\]
which is exactly so that the three operators (\ref{all_three}), namely
\[\textstyle
\phi_b\mapsto\nabla_{(a}\phi_{b)}-\frac1n(\nabla^c\phi_c)g_{ab},\qquad
\phi_b\mapsto\nabla^b\phi_b,\qquad\phi_b\mapsto\nabla_{[a}\phi_{b]},\]
are invariantly defined.

Regarding the second advantage, it is convenient to adopt a universal notation
for the natural irreducible vector bundles available on a parabolic geometry,
as detailed in~\cite{BE,CS}. Let's see how this works in $5$-dimensional
conformal geometry. The general irreducible spin bundle is
\[\bthree{a}{b}{c},\qquad\mbox{where}\ a\in{\mathbb{R}}\
\mbox{and}\ b,c\in{\mathbb{Z}}_{\geq 0}\]
and for tensor bundles we restrict $c$ to be even. (Roughly speaking, this
indicates a (complex) irreducible representation of
${\mathbb{R}}_{>0}\times{\mathrm{Spin(5)}}$ with the real number $a$ over the
crossed node recording the action of~${\mathbb{R}}_{>0}$.) The de~Rham complex
is
\begin{equation}\label{deRham5dim}
\bthree{0}{0}{0}\to\bthree{-2}{1}{0}\to\bthree{-3}{0}{2}
\to\bthree{-4}{0}{2}\to\bthree{-5}{1}{0}\to\bthree{-5}{0}{0}\end{equation}
and the three conformally invariant first-order linear differential operators
on weighted $1$-forms are
\[\bthree{0}{1}{0}\to\bthree{-2}{2}{0},\qquad
\bthree{-5}{1}{0}\to\bthree{-5}{0}{0},\qquad
\bthree{-2}{1}{0}\to\bthree{-3}{0}{2}.\]
Notice that $\Wedge^0[w]=\bthree{w}{0}{0}$ and that the identification
$\Wedge^5=\Wedge^0[-5]$ is built into the notation. There are many other
advantages of this seemingly arcane notation: its utility in $5$-dimensional
spin geometry is the subject of the following section.

\section{Five-dimensional conformal spin geometry}\label{5d_conformal}
In this section we discuss five-dimensional conformal geometry by means of
spinors (that is to say four-spinors in this dimension (also known as
{\em twistors})). For simplicity, we shall suppose that the conformal metric
has split signature. There is some elementary linear algebra behind this
discussion as follows. Suppose that ${\mathbb{T}}$ is a four-dimensional real
vector space equipped with a nondegenerate skew form $\epsilon_{\alpha\beta}$.
Let us write $\epsilon^{\alpha\beta}$ for its inverse so that
$\epsilon_{\alpha\beta}\epsilon^{\alpha\gamma}=\delta_\alpha{}^\gamma$. The
vector space $\Wedge^2{\mathbb{T}}$ naturally splits:
\begin{equation}\label{symplectic_decomposition}
\Wedge^2{\mathbb{T}}
=\big\{P_{\alpha\beta}\,|\,\epsilon^{\alpha\beta}P_{\alpha\beta}=0\big\}
\oplus\{P_{\alpha\beta}=\lambda\epsilon_{\alpha\beta}\}
\equiv\Wedge_\perp^2{\mathbb{T}}\oplus{\mathbb{R}}.\end{equation}
Otherwise said, if we let ${\mathrm{Sp}}(4,{\mathbb{R}})$ denote the linear
automorphisms of ${\mathbb{T}}\cong{\mathbb{R}}^4$ preserving the symplectic
form $\epsilon_{\alpha\beta}$, then~(\ref{symplectic_decomposition}) is the
decomposition of $\Wedge^2{\mathbb{T}}$ into
${\mathrm{Sp}}(4,{\mathbb{R}})$-irreducibles. The $5$-dimensional vector space
$\Wedge_\perp^2{\mathbb{T}}$ is acquires a split signature metric
\[\|P_{\alpha\beta}\|^2\equiv
P_{\alpha\beta}P_{\gamma\eta}\epsilon^{\alpha\gamma}\epsilon^{\beta\eta}.\]
Otherwise said, we have constructed the isomorphism
${\mathrm{Sp}}(4,{\mathbb{R}})\cong{\mathrm{Spin}}(3,2)$. According to the
Pl\"ucker relations, the null vectors in $\Wedge_\perp^2{\mathbb{T}}$ are
the decomposable tensors.

With the conventions of~\cite{BE}, the bundle version of this discussion yields
the splitting
\[\Wedge^2\big(\bthree{0}{0}{1}\big)=\bthree{0}{1}{0}\oplus\bthree{1}{0}{0}
=\Wedge^1[2]\oplus\Wedge^0[1].\]
More precisely, we may view a split signature conformal spin structure on a
five-dimensional split signature conformal manifold as a rank $4$ bundle,
denoted by $\bthree{0}{0}{1}$ and equipped with
\begin{itemize}\itemsep=0pt
\item a nondegenerate section of $\Wedge^2\big(\bthree{0}{0}{1}\big)$ with values
in~$\Wedge^0[-1]$,
\item an identification
$\Wedge_\perp^2\big(\bthree{0}{0}{1}\big)=\Wedge^1[2]$.
\end{itemize}
A scale $\sigma\in\Gamma\big({\Wedge}^0[1]\big)$ induces a nondegenerate section of
$\Wedge^2\big(\bthree{0}{0}{1}\big)$ (and, according to the discussion in Section~\ref{conf}, a~compatible nondegenerate (split signature) symmetric form on
$\Wedge^1$).
The first summand in the splitting
\begin{equation}\label{adjoint_bundle}
\End\big(\bthree{0}{0}{1}\big)=\bthree{-1}{0}{2}\oplus\bthree{-1}{1}{0}
\oplus\bthree{0}{0}{0}\end{equation}
captures the trace-free endomorphisms of $\bthree{0}{0}{1}$ that preserve its
conformal skew form \big(and, simultaneously, the second summand in
\[\End\big({\Wedge}^1\big)=\End\big(\, \bthree{-2}{1}{0}\big)
=\bthree{-2}{2}{0}\oplus\bthree{-1}{0}{2}\oplus\bthree{0}{0}{0}\]
plays the same r\^ole with respect to the conformal metric on $\Wedge^1$\big).
{From} this viewpoint, a scale $\sigma\in\Gamma\big({\Wedge}^0[1]\big)$ gives rise to a
connection on the spin bundle $\bthree{0}{0}{1}$ as follows. We firstly insist
that this connection preserve its conformal skew form and also that the induced
connection on~$\Wedge^0[1]$ annihilate~$\sigma$. According to~(\ref{adjoint_bundle}), the freedom in choosing such a connection lies in
\begin{equation}\label{freedom}
\Wedge^1\otimes\bthree{-1}{0}{2}
=\bthree{-2}{1}{0}\otimes\bthree{-1}{0}{2}.\end{equation}
On the other hand, the induced operator $\nabla\colon \Wedge^1\to\Wedge^2$ differs
from the exterior derivative by a homomorphism $\Wedge^1\to\Wedge^2$ (it is the
{\em torsion} of the induced affine connection) and, according
to~(\ref{deRham5dim}), lies in
\begin{equation}\label{torsion}
\Hom\big({\Wedge}^1,\Wedge^2\big)=\bthree{0}{1}{0}\otimes\bthree{-3}{0}{2}.
\end{equation}
Comparing the bundles (\ref{freedom}) and (\ref{torsion}) and noting that they
are canonically isomorphic suggests that the homomorphism $\Wedge^1\to\Wedge^2$
may be precisely eliminated by the allowed freedom. It is easy to check that
this is, indeed, the case.

\section{Three-dimensional conformal spin geometry}\label{3d_conformal}
A split signature three-dimensional conformal spin structure may be viewed as a
rank two `spin bundle' $S$ equipped with an identification
$\bigodot^2\!S=\Wedge^1[2]$ (cf.~\cite{S}). With the Dynkin diagram notation
from~\cite{BE},
\[\textstyle S=\btwo{1}{0},\qquad \bigodot^2\!\btwo{1}{0}=\btwo{2}{0},\qquad
\Wedge^1=\btwo{2}{-2},\qquad\Wedge^0[1]=\btwo{0}{1},\]
and the de~Rham complex is
\[\Wedge^0=\btwo{0}{0}\to\btwo{2}{-2}\to\btwo{2}{-3}\to\btwo{0}{-3}
=\Wedge^3.\]
The conformal structure can now be characterised by decreeing that the simple
spinors in $\btwo{2}{0}$ are the null vectors in $\Wedge^1[2]=\btwo{2}{0}$.

The first summand in
\[\End\big(\btwo{1}{0}\big)=\btwo{2}{-1}\oplus\btwo{0}{0}\]
captures the trace-free endomorphisms of $\btwo{1}{0}$ and it follows that the
freedom in choosing a connection on this spin bundle annihilating a scale,
i.e., a~nowhere vanishing section of $\Wedge^2\big(\btwo{1}{0}\big)=\btwo{0}{1}$, lies
in
\[\Wedge^1\otimes\btwo{2}{-1}=\btwo{2}{-2}\otimes\btwo{2}{-1}.\]
On the other hand the torsion of the induced affine connection lies in
the canonically isomorphic bundle
\[\Hom\big({\Wedge}^1,\Wedge^2\big)=\btwo{2}{0}\otimes\btwo{2}{-3}.\]
It is easy to check that the freedom in choice of connection on $\btwo{1}{0}$
may be used exactly to eliminate this torsion. More precisely, we have the
following:
\begin{prop}\label{one} Given a scale, i.e., a nowhere vanishing
$\sigma\in\Gamma\big(\btwo{0}{1}\big)$, there is a unique connection on $\btwo{1}{0}$
so that
\begin{itemize}\itemsep=0pt
\item the induced connection on $\Wedge^2\big(\btwo{1}{0}\big)=\btwo{0}{1}$
annihilates~$\sigma$,
\item the torsion of the induced connection on $\Wedge^1=\btwo{2}{-2}$
vanishes.
\end{itemize}
\end{prop}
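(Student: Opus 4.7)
The plan is to follow exactly the strategy sketched in the paragraph preceding the proposition: identify the space of admissible connections on~$\btwo{1}{0}$ as an affine space over $\Wedge^1\otimes\btwo{2}{-1}$, recognise the induced-torsion map as an algebraic bundle homomorphism into $\Hom(\Wedge^1,\Wedge^2)$, and verify it is a bundle isomorphism. Existence and uniqueness then reduce to invertibility of a single algebraic map.

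First I would exhibit some auxiliary connection $\nabla^{0}$ on $\btwo{1}{0}$ with $\nabla^{0}\sigma=0$. Such a connection exists: start from any local connection on $\btwo{1}{0}$, compute the induced connection on the line bundle $\btwo{0}{1}=\Wedge^2\big(\btwo{1}{0}\big)$, and subtract a suitable $1$-form from its trace part so that the induced action on $\sigma$ becomes zero. Any other connection annihilating $\sigma$ then differs from $\nabla^{0}$ by $\widehat\nabla_a\xi^B-\nabla^{0}_a\xi^B=\Psi_a{}^B{}_C\xi^C$ with $\Psi_a{}^B{}_C$ trace-free in $B,C$, trace-freeness being precisely what ensures the induced connection on $\btwo{0}{1}$ continues to annihilate $\sigma$. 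Thus $\Psi\in\Wedge^1\otimes\btwo{2}{-1}$. Passing to $\Wedge^1=\bigodot^2\btwo{1}{0}\otimes\btwo{0}{-2}$ via symmetric tensor product (noting that $\Psi$ acts trivially on the density factor), the resulting change of torsion is a natural algebraic homomorphism
\[
T\colon\Wedge^1\otimes\btwo{2}{-1}\longrightarrow\Hom\big(\Wedge^1,\Wedge^2\big)=\btwo{2}{0}\otimes\btwo{2}{-3},
\]
obtained by antisymmetrising the prescribed action of $\Psi$ over the two $\Wedge^1$-indices.

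The crux is to show that $T$ is a bundle isomorphism. Source and target decompose identically under $\mathrm{SL}(2,\mathbb{R})$ as $\btwo{4}{-3}\oplus\btwo{2}{-3}\oplus\btwo{0}{-3}$ (from the Clebsch--Gordan rule $[2]\otimes[2]=[4]\oplus[2]\oplus[0]$ and summing conformal weights), and this decomposition is multiplicity-free, so it suffices to check that $T$ is nonzero on each irreducible summand. I would verify this in spinor indices by writing $\Psi_{(AB)}{}^C{}_D$ subject to $\Psi_{AB}{}^C{}_C=0$, extracting its three irreducible pieces via the symplectic form $\epsilon$, and checking that each contributes nontrivially to the antisymmetrisation that produces $T(\Psi)$. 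The main obstacle is this index bookkeeping --- making sure no piece is killed by a hidden trace --- but it is a finite, purely representation-theoretic calculation. Once $T$ is shown to be invertible, the required connection is $\nabla^{0}-T^{-1}\big(\tau^{0}\big)$, where $\tau^{0}$ is the torsion of $\nabla^{0}$, and this is manifestly the unique such connection.
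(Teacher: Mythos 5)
Your argument is exactly the paper's: the admissible connections form an affine space over $\Wedge^1\otimes\btwo{2}{-1}\,=\btwo{2}{-2}\otimes\btwo{2}{-1}$, the change of torsion is a natural algebraic map into the canonically isomorphic bundle $\Hom\big({\Wedge}^1,\Wedge^2\big)=\btwo{2}{0}\otimes\btwo{2}{-3}$, and since both sides decompose multiplicity-freely as $\btwo{4}{-3}\oplus\btwo{2}{-3}\oplus\btwo{0}{-3}$ it suffices to see the map is nonzero on each summand. The paper likewise leaves that last spinor check as ``easy to check'' (carrying out the analogous computation in full only for the $G_2$ case in Proposition~\ref{three}), so your proposal is correct and essentially identical in route.
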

It is straightforward to figure out how this preferred connection changes under
change of scale. To do this, let us adapt the classical two-spinor notation
of~\cite{OT} to write
\[\nabla\colon \ \btwo{1}{0}\longrightarrow\btwo{2}{-2}\otimes\btwo{1}{0}
\quad\mbox{as} \ \phi_C\longmapsto\nabla_{AB}\phi_C.\]
\begin{prop}\label{two} Let us change scale $\sigma\in\Gamma\big({\Wedge}^0[1]\big)$ by
$\widehat\sigma=\Omega^{-1}\sigma$. Then, for $\phi_C\in\Gamma\big(\btwo{1}{0}\big)$,
\begin{equation}\label{change_of_spin_connection}
\widehat\nabla_{AB}\phi_C=\nabla_{AB}\phi_C+\Upsilon_{AB}\phi_C
-\Upsilon_{C(A}\phi_{B)},\qquad\mbox{where} \
\Upsilon_{AB}\equiv\Omega^{-1}\nabla_{AB}\Omega.\end{equation}
\end{prop}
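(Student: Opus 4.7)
The plan is to verify directly that the connection $\widehat\nabla$ defined by~\eqref{change_of_spin_connection} coincides with the unique preferred connection attached by Proposition~\ref{one} to the rescaled scale $\widehat\sigma=\Omega^{-1}\sigma$. The difference of two connections on the same spin bundle is tensorial; setting
\[\Pi_{AB,C}{}^D\phi_D:=\Upsilon_{AB}\phi_C-\Upsilon_{C(A}\phi_{B)},\]
the proposed formula reads $\widehat\nabla_{AB}\phi_C=\nabla_{AB}\phi_C+\Pi_{AB,C}{}^D\phi_D$. By the uniqueness in Proposition~\ref{one}, it is enough to verify (i) that the induced connection on $\Wedge^2\big(\btwo{1}{0}\big)=\btwo{0}{1}$ annihilates $\widehat\sigma$, and (ii) that the induced connection on $\Wedge^1=\btwo{2}{-2}$ is torsion-free.

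For condition~(i), the induced change on the determinant bundle is the trace $\Pi_{AB,C}{}^C$. Since $\nabla_{AB}$ is a vector derivative with spinor indices symmetric in $A,B$, the tensor $\Upsilon_{AB}=\Omega^{-1}\nabla_{AB}\Omega$ is symmetric, so using $\delta_C{}^C=2$ one finds
\[\Pi_{AB,C}{}^C=2\Upsilon_{AB}-\Upsilon_{C(A}\delta_{B)}{}^C=2\Upsilon_{AB}-\Upsilon_{AB}=\Upsilon_{AB}.\]
This is precisely the weight-one shift $\widehat\nabla\phi=\nabla\phi+w\Upsilon\phi$ (with $w=1$, the conformal weight of $\btwo{0}{1}$) needed to pass from $\nabla\sigma=0$ to $\widehat\nabla\widehat\sigma=0$, as recalled in Section~\ref{conf}.

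For condition~(ii), since $\nabla$ is already torsion-free on $\Wedge^1$, it suffices to show that the tensorial correction
\[T_{AB,CD}:=\widehat\nabla_{AB}\omega_{CD}-\nabla_{AB}\omega_{CD}\]
on a $1$-form $\omega_{CD}\in\btwo{2}{-2}$ has vanishing antisymmetric part under exchange of the vector-index pairs $(AB)\leftrightarrow(CD)$. Applying $\Pi$ to each spinor index of $\omega_{CD}$ contributes $2\Upsilon_{AB}\omega_{CD}$ from its trace part, which cancels exactly against the $-2\Upsilon_{AB}\omega_{CD}$ shift coming from the density-weight factor of $\Wedge^1$, leaving
\[T_{AB,CD}=-\Upsilon_{C(A}\omega_{B)D}-\Upsilon_{D(A}\omega_{|C|B)}.\]
Expanding the two symmetrizations into four monomials of the form $\Upsilon_{XY}\omega_{ZW}$ and applying the individual symmetries $\Upsilon_{XY}=\Upsilon_{YX}$ and $\omega_{ZW}=\omega_{WZ}$, one verifies term-by-term that $T_{AB,CD}=T_{CD,AB}$, so the required antisymmetric part vanishes.

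The main obstacle is the spinor bookkeeping in~(ii): organising the symmetrizations correctly, tracking the density-weight contribution, and confirming the $(AB)\leftrightarrow(CD)$ symmetry of~$T$. Once this is done, both conditions of Proposition~\ref{one} hold for $\widehat\nabla$ with respect to $\widehat\sigma$, and the uniqueness clause forces $\widehat\nabla$ to be the preferred connection for~$\widehat\sigma$, establishing~\eqref{change_of_spin_connection}.
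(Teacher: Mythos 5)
Your proposal is correct and follows essentially the same route as the paper: define $\widehat\nabla$ by the stated formula and check, via the tensorial difference, that it satisfies both characterising conditions of Proposition~\ref{one} for $\widehat\sigma$ (the trace giving the weight-one shift on $\btwo{0}{1}$, and the symmetry $T_{AB,CD}=T_{CD,AB}$ giving torsion-freeness), then invoke uniqueness. The only cosmetic difference is that the paper routes the second condition through the intermediate transformation laws on densities and on $\btwo{1}{-1}$ rather than computing the correction on $\Wedge^1$ in one step.
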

\begin{proof} Given $\nabla_{AB}$ we use (\ref{change_of_spin_connection}) to
define $\widehat\nabla_{AB}$ and then verify that it has the characterising
properties from Proposition~\ref{one} for the scale~$\widehat\sigma$. Firstly,
if $\sigma_{CD}\in\Gamma\big({\Wedge}^2\big(\btwo{1}{0}\big)\big)$, then
\begin{gather}\label{weight1}
\widehat\nabla_{AB}\sigma_{CD}=\nabla_{AB}\sigma_{CD}
+2\Upsilon_{AB}\sigma_{CD}-\Upsilon_{C(A}\sigma_{B)D}
+\Upsilon_{D(A}\sigma_{B)C}=\nabla_{AB}\sigma_{CD}
+\Upsilon_{AB}\sigma_{CD}.\!\!\!\!\!\end{gather}
Therefore $\widehat\nabla_{AB}\widehat\sigma_{CD}
=\nabla_{AB}\big(\Omega^{-1}\sigma_{CD}\big) +\Upsilon_{AB}\big(\Omega^{-1}\sigma_{CD}\big)
=\Omega^{-1}\nabla_{AB}\sigma_{CD}$. Thus, if
$\nabla_{AB}\sigma_{CD}=0$, then $\widehat\nabla_{AB}\widehat\sigma_{CD}=0$,
which accounts for the first condition in Proposition~\ref{one}. Now equation~(\ref{weight1}) may be abbreviated as
$\widehat\nabla_{AB}\sigma=\nabla_{AB}\sigma+\Upsilon_{AB}\sigma$ for
$\sigma\in\Gamma\big(\btwo{0}{1}\big)$ and it follows that
\[\widehat\nabla_{AB}\rho=\nabla_{AB}\rho+w\Upsilon_{AB}\rho,\qquad
\mbox{for}\ \rho\in\Gamma\big(\btwo{0}{w}\big).\]
Combining this with (\ref{change_of_spin_connection}), it follows
that $\widehat\nabla_{AB}\phi_C=\nabla_{AB}\phi_C-\Upsilon_{C(A}\phi_{B)}$,
for $\phi_C\in\Gamma\big(\btwo{1}{-1}\,\big)$ and hence that
\[\widehat\nabla_{AB}\omega_{CD}=\nabla_{AB}\omega_{CD}
-\Upsilon_{C(A}\omega_{B)D}-\Upsilon_{D(A}\omega_{B)C},\]
for $\omega_{CD}\in\Gamma\big(\btwo{2}{-2}\,\big)=\Gamma\big({\Wedge}^1\big)$. It follows that
\[\widehat\nabla_{AB}\omega_{CD}-\widehat\nabla_{CD}\omega_{AB}
=\nabla_{AB}\omega_{CD}-\nabla_{CD}\omega_{AB},\]
which accounts for the second condition in Proposition~\ref{one}.
\end{proof}

\section{The Rumin complex}\label{rumin}
Contact geometry is the geometry of a \textit{maximally non-integrable}
corank one subbundle $H\subset TM$, where $M$ is of dimension $2n+1$. Maximal
non-integrability is to say that locally $H$ is given as the kernel of a
\textit{contact form} $\alpha$ such that $\alpha \wedge ({\rm d}\alpha)^n$ is
non-vanishing.

Alternatively, writing $\Wedge_H^1$ for the bundle of one
forms $\Wedge^1$ restricted naturally to $H$, and $L$ for the annihilator of
$H$, we have a short exact sequence
\begin{equation*}
0\to L\to\Wedge^1\to\Wedge_H^1\to 0
\end{equation*}
and, therefore, short exact sequences
\begin{equation}\label{SES2}
0\to \Wedge_H^{k-1}\otimes L\to\Wedge^k\to\Wedge_H^k\to 0\end{equation}
for $k=1,\ldots,2n$.
If we now consider the exterior derivative
\[\begin{array}{ccc}\Wedge_H^1&&\Wedge_H^2\\ \uparrow&&\uparrow\\
\Wedge^1&\xrightarrow{\ {\rm d}\ }\hspace{-14pt}
&\Wedge^2\\ \uparrow&&\uparrow\\
L&&\Wedge_H^1\otimes L,
\end{array}\]
then, by the Leibniz rule, the composition
$L\to\Wedge^1\xrightarrow{\,{\rm d}\,}\Wedge^2\to\Wedge_H^2$ is actually a vector
bundle homomorphism
known as the {\em Levi form}. The maximal non-integrability condition is
equivalent to the Levi form being injective with image consisting of
nondegenerate forms.

If one writes out the de Rham sequence along with the short exact
sequences (\ref{SES2}), one can obtain by diagram chasing, the Rumin
complex~\cite{R}. The Rumin complex is a replacement for the de Rham complex
on any contact manifold in that it computes the de Rham cohomology, but it is
in some sense more efficient in that derivatives are only taken in contact
directions. We are concerned with the case $\dim M = 5$ in which case the
diagram to chase is
\[\begin{array}{@{}ccccccccccc}
&&\Wedge_H^1&&\Wedge_H^2&&\Wedge_H^3&&\Wedge_H^4&&\\
&&\uparrow&&\uparrow&&\uparrow&&\uparrow&&\\
\Wedge^0 & \hspace{3mm}\xrightarrow{\ {\rm d}\ }
\hspace{3mm} & \Wedge^1&\hspace{3mm}\xrightarrow{\ {\rm d}\ }&\Wedge^2
&\xrightarrow{\ {\rm d}\ }&\Wedge^3
&\xrightarrow{\ {\rm d}\ }&\Wedge^4
& \xrightarrow{\ {\rm d}\ }& \hspace{3mm}\Wedge^5\\\
&&\uparrow&&\uparrow&&\uparrow&&\uparrow&&\\
&&L&&\Wedge_H^1\otimes L && \Wedge_H^2\otimes L && \Wedge_H^3\otimes L,&&
\end{array}\]
and, writing $\Wedge_H^2=\Wedge_{H\perp}^2\oplus L$, where $\Wedge_{H\perp}^2$
comprises $2$-forms on $H$ that are trace-free with respect to the Levi form,
one obtains the invariantly defined complex
\[\begin{array}{@{}ccccccccccc}
\Wedge^0 & \hspace{2mm} \xrightarrow{\ {\rm d}_\perp \ }
& \Wedge^1_H&\xrightarrow{\ {\rm d}_\perp \ }
&\Wedge^2_{H\perp}&\xrightarrow{\ {\rm d}_\perp^{(2)} \ }
&\Wedge_{H\perp}^2 \otimes L&\xrightarrow{\ {\rm d}_\perp \ }
&\Wedge^3_H \otimes L & \xrightarrow{\ {\rm d}_\perp \ }
& \hspace{2mm} \Wedge^5.
\end{array}\]
A difference to the de Rham complex is that one obtains
${\rm d}_\perp^{(2)}\colon \Wedge_{H\perp}^2 \to \Wedge_{H\perp}^2 \otimes L$,
which is a~second-order differential operator.

\section[Spinors in G\_2 contact geometry]{Spinors in $\boldsymbol{G_2}$ contact geometry}\label{G2}
A $G_2$ contact geometry is an additional structure on the contact distribution
of a five-dimensional contact manifold. As observed in the previous section, a~contact geometry is naturally equipped with its Levi form $L\to\Wedge_H^2$ and
the contact distribution $H$ thereby inherits a nondegenerate skew form
defined up to scale. This is just what is needed to talk about {\em Legendrean
varieties}~\cite{LM} in the projective bundle ${\mathbb{P}}(H)\to M$. A~{\em $G_2$ contact} structure on $M$ is a field of Legendrean twisted cubics
in ${\mathbb{P}}(H)$. Precisely, this means that, for all $m\in M$, there is a
twisted cubic $C_m\subset{\mathbb{P}}(H_m)$, varying smoothly with $m\in M$,
such that the $2$-planes in $H_m$ covering the tangent lines to the cubic are
null for the Levi form. Equivalently, such a $G_2$ contact structure may be
viewed as a~rank two `spin bundle' S equipped with a `Levi-compatible'
identification $\bigodot^3\!S=\Wedge_H^1[2]$, where $\Wedge^5=\Wedge^0[-3]$.
Levi-compatibility means that the Levi form
\[\textstyle
L[4]\to\Wedge_H^2[4]=\Wedge^2\big({\bigodot}^3\!S\big)
=\big({\bigodot}^4\!S\otimes\Wedge^2S\big)\oplus\big({\Wedge}^2S\big)^3\]
has its range in the second summand and thus provides an identification
$L[4]=\big({\Wedge}^2S\big)^3$. In these circumstances, notice that
\[\textstyle\Wedge_H^4[8]=\Wedge^4\big({\bigodot}^3\!S\big)=\big({\wedge}^2S\big)^6\quad
\Rightarrow\quad
\Wedge^0[9]=\Wedge^5[12]=\Wedge_H^4[8]\otimes L[4]=
\big({\Wedge}^2S\big)^9\]
and, therefore, we find canonical identifications $\Wedge^2S=\Wedge^0[1]$ and
$L=\Wedge^0[-1]$. In any case, the $G_2$ contact structure can now be
characterised by decreeing that the simple spinors in $H=\bigodot^3\!S[-1]$
constitute the cone over the twisted cubic (and there is a clear analogy with
conformal spin geometry in dimension three). More detail can be found in
\cite[Section~5]{EN2} and the flat model is presented in~\cite[Section~4]{EN1}. We should
also point out that the geometry of the rank four bundle~$H$ follows that of
Bryant's `$H_3$-structures' on the tangent bundle in four dimensions~\cite{B}.

The reason for the name `$G_2$ contact structure' is that this geometric data
defines a parabolic geometry of type $(G_2,P)$ where~$G_2$ is the
simply-connected exceptional Lie group of split type~$G_2$ and~$P$ is a~particular parabolic subgroup such that~$G_2/P$ is a contact manifold: see, for
example,~\cite[Section~4.2.8]{CS} (and, in particular, it is explained in~\cite{EN1}
that this particular realisation of the Lie algebra of $G_2$ goes back to
Engel~\cite{E}). With the Dynkin diagram notation from~\cite{BE}, this
motivates our writing $\bigodot^k\!S[w]=\gtwo{k}{w}$ (just another way of
organising the irreducible representations of
${\mathrm{GL}}_+(2,{\mathbb{R}})$) so that
\begin{gather*} \textstyle S=\gtwo{1}{0},\qquad \bigodot^3\!\gtwo{1}{0}=\gtwo{3}{0},\qquad
\Wedge_H^1=\gtwo{3}{-2},\qquad H=\gtwo{3}{-1},\qquad\Wedge^0[1]=\gtwo{0}{1},
\end{gather*}
and the Rumin complex is
\[\gtwo{0}{0}\xrightarrow{\,{\rm d}_\perp\,}\gtwo{3}{-2}
\xrightarrow{\,{\rm d}_\perp\,}\gtwo{4}{-3}
\xrightarrow{\,{\rm d}_\perp^{(2)}\,}\gtwo{4}{-4}
\xrightarrow{\,{\rm d}_\perp\,}\gtwo{3}{-4}\to
\gtwo{0}{-3}\]
(consistent with the basic {\em BGG complex} from~\cite{BE}). Notice that
\[\Wedge_H^2=\Wedge^2\big(\gtwo{3}{-2}\,\big)
=\gtwo{4}{-3}\oplus\gtwo{0}{-1}\]
so that the Levi form $L=\gtwo{0}{-1}\hookrightarrow\Wedge^2\big({\Wedge}_H^1\big)$ is
built into the notation.

Regarding calculus on a contact manifold, it is natural to consider
{\em partial connections}, rather than connections, on vector bundles in which
directional derivatives are defined, in the first instance, only in the contact
directions. More precisely, a {\em partial connection} on a vector bundle~$E$
is a~linear differential operator
\[\nabla_H\colon \ E\to\Wedge_H^1\otimes E\]
satisfying a {\em partial Leibniz rule} $\nabla_H(fs)=f\nabla_Hs+{\rm d}_\perp
f\otimes s$. (In fact, a partial connection can be uniquely promoted
\cite[Proposition~3.5]{EG} to a full connection but we shall not need this
trick.) In analogy with three-dimensional spin geometry, we may construct a~preferred partial connection on~$\gtwo{1}{0}$ in the presence of a~`scale'
$\sigma\in\Gamma\big(\gtwo{0}{1}\big)$.

The construction of this preferred partial connection follows the same route
save for a minor yet crucial distinction. For any contact manifold of dimension
$\geq 5$, a partial connection on $\Wedge_H^1$ gives rise to a linear
differential operator $\nabla_\perp\colon \Wedge_H^1\to\Wedge_{H\perp}^2$ defined as
the composition
\[\Wedge_H^1\xrightarrow{\,\nabla_H\,}
\Wedge_H^1\otimes\Wedge_H^1\xrightarrow{\,\wedge\,}
\Wedge_H^2\to\Wedge_{H\perp}^2\]
with the same symbol as the invariantly defined Rumin operator. It follows
that the difference
\[\nabla_\perp-{\rm d}_\perp\colon \ \Wedge_H^1\to\Wedge_{H\perp}^2\]
is actually a homomorphism of vector bundles. By definition, this is the
{\em partial torsion} of a~partial connection
$\nabla_H\colon \Wedge_H^1\to\Wedge_H^1\otimes\Wedge_H^1$.

In the case of a $G_2$ contact structure, bearing in mind that
$\Wedge^2\big(\gtwo{1}{0}\big)=\gtwo{0}{1}$, a partial connection on the spin bundle
$\gtwo{1}{0}$ induces partial connections on all spin bundles $\gtwo{k}{w}$
and, in particular, on $\Wedge_H^1=\gtwo{3}{-2}$. Thus, we may ask about
its partial torsion, which lies in
\begin{equation}\label{partial_torsion_decomposition}
\Hom\big({\Wedge}_H^1,\Wedge_{H\perp}^2\big)=\Hom\big(\gtwo{3}{-2},\gtwo{4}{-3}\,\big)
=\gtwo{7}{-4}\oplus\gtwo{5}{-3}\oplus\gtwo{3}{-2}\oplus\gtwo{1}{-1}.
\end{equation}
This decomposition is crucial in characterising preferred spin connections as
follows.
\begin{prop}\label{three}
Given a scale, i.e., a nowhere vanishing $\sigma\in\Gamma\big(\gtwo{0}{1}\big)$, there
is a unique partial connection on $\gtwo{1}{0}$ so that
\begin{itemize}\itemsep=0pt
\item the induced partial connection on $\Wedge^2\big(\gtwo{1}{0}\big)=\gtwo{0}{1}$
annihilates~$\sigma$,
\item the partial torsion of the induced partial connection on $\Wedge_H^1=\gtwo{3}{-2}$ lies in $\gtwo{7}{-4}$.
\end{itemize}
\end{prop}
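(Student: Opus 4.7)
The plan is to mimic the argument behind Proposition~\ref{one} in the partial/contact setting, exploiting the fact that the freedom in choosing a partial connection on $\gtwo{1}{0}$ decomposes into exactly the same ${\mathrm{GL}}_+(2,{\mathbb{R}})$-irreducibles as the non-$\gtwo{7}{-4}$ part of the partial torsion space~(\ref{partial_torsion_decomposition}). A starting partial connection satisfying the first condition exists (locally take any frame, then globalise with a partition of unity), and any two such partial connections differ by a section of $\Wedge_H^1\otimes\End_0\big(\gtwo{1}{0}\big)$. Since in rank two every trace-free endomorphism automatically preserves the symplectic form, $\End_0\big(\gtwo{1}{0}\big)=\gtwo{2}{-1}$ and Clebsch-Gordan gives
\[\Wedge_H^1\otimes\gtwo{2}{-1}=\gtwo{3}{-2}\otimes\gtwo{2}{-1}
=\gtwo{5}{-3}\oplus\gtwo{3}{-2}\oplus\gtwo{1}{-1}\]
as the total freedom.

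Next I would compute how the induced partial torsion on $\Wedge_H^1$, landing in $\Wedge_{H\perp}^2=\gtwo{4}{-3}$, changes under such a shift. Following the pattern of Proposition~\ref{two}, adding $\Phi\in\gtwo{3}{-2}\otimes\gtwo{2}{-1}$ to the connection on $\gtwo{1}{0}$ induces an explicit algebraic modification of the induced connection on $\Wedge_H^1=\gtwo{3}{-2}$; composing with $\wedge\colon\Wedge_H^1\otimes\Wedge_H^1\to\Wedge_H^2$ and projecting to $\Wedge_{H\perp}^2$ then yields a ${\mathrm{GL}}_+(2,{\mathbb{R}})$-equivariant bundle homomorphism
\[\gtwo{5}{-3}\oplus\gtwo{3}{-2}\oplus\gtwo{1}{-1}
\longrightarrow\gtwo{7}{-4}\oplus\gtwo{5}{-3}\oplus\gtwo{3}{-2}\oplus\gtwo{1}{-1}.\]
By Schur's lemma, no component of $\Phi$ can contribute to the $\gtwo{7}{-4}$-summand of the torsion, and between matching summands the map is a scalar multiple of the identity.

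Existence and uniqueness of the preferred partial connection then reduce to showing that each of these three diagonal scalars is non-zero, for then the three non-$\gtwo{7}{-4}$ components of the partial torsion can be killed by a unique choice of $\Phi$, leaving a residual $\gtwo{7}{-4}$-valued torsion. I expect this diagonal non-vanishing to be the main obstacle: it is a routine but delicate spinor calculation, and one must keep careful track of the projection $\Wedge_H^2\to\Wedge_{H\perp}^2$ determined by the Levi form $L=\gtwo{0}{-1}\hookrightarrow\Wedge_H^2=\gtwo{4}{-3}\oplus\gtwo{0}{-1}$. The cleanest route is probably to pick a distinguished (say, highest-weight) section of each irreducible summand of the freedom and evaluate the resulting change in partial torsion on a convenient test pair of spinors, verifying non-vanishing component-by-component.
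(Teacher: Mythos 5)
Your proposal is correct and follows essentially the same route as the paper: compare the freedom $\gtwo{5}{-3}\oplus\gtwo{3}{-2}\oplus\gtwo{1}{-1}$ with the decomposition~(\ref{partial_torsion_decomposition}), note that Schur's lemma reduces everything to three diagonal scalars, and then verify their non-vanishing by explicit spinor computation. The paper carries out exactly those three computations (obtaining the scalars $2$, $-\tfrac12$ and $-2$ for the $\lambda$, $\mu$ and $\nu$ components of~(\ref{split})), which is the "routine but delicate" step you correctly identify as the remaining work.
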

\begin{proof}
The first summand in
\[\End\big(\gtwo{1}{0}\big)=\gtwo{2}{-1}\oplus\gtwo{0}{0}\]
captures the trace-free endomorphisms of $\gtwo{1}{0}$ and it follows that the
freedom in choosing a~partial connection on this spin bundle annihilating
$\sigma$ lies in
\begin{equation}\label{partial_freedom}
\Wedge_H^1\otimes\gtwo{2}{-1}=\gtwo{3}{-2}\otimes\gtwo{2}{-1}
=\gtwo{5}{-3}\oplus\gtwo{3}{-2}\oplus\gtwo{1}{-1}.\end{equation}
Comparison with (\ref{partial_torsion_decomposition}) certainly suggests that
all but the piece in $\gtwo{7}{-4}$ can be uniquely eliminated. We may verify
this using spinors. With the familiar conventions of~\cite{OT}, let us write
$\epsilon_{AB}\in\Gamma\big({\Wedge}^2\big(\gtwo{1}{0}\big)\big)$ rather than
$\sigma\in\Gamma\big(\gtwo{0}{1}\big)$. Then,
choosing a partial connection
\[\phi_D\xrightarrow{\,\nabla_H\,}\nabla_{ABC}\phi_D\qquad\mbox{on} \quad
\gtwo{1}{0},\]
the general partial connection on $\gtwo{1}{0}$ with
$\nabla_{ABC}\epsilon_{DE}=0$ has the form
\[\nabla_{ABC}\phi_D+\Gamma_{ABCD}{}^E\phi_E,\]
where $\Gamma_{ABCDE}=\Gamma_{(ABC)(DE)}$ (i.e., lying in
$\gtwo{3}{-2}\otimes\gtwo{2}{-1}$, as in~(\ref{partial_freedom})). By the
Leibniz rule, the induced
operator $\gtwo{3}{-2}\to\gtwo{4}{-3}$ is
\begin{equation}\label{induced_operator}
\omega_{DEF}\longmapsto\nabla_{(AB}{}^E\omega_{CD)E}
+2\Gamma_{(AB}{}^E{}_C{}^G\omega_{D)EG}
-\Gamma_{E(AB}{}^E{}^G\omega_{CD)G}.\end{equation}
Therefore, according to the decomposition~(\ref{partial_freedom}), we should
now write
\begin{equation}\label{split}\Gamma_{ABC}{}^{DE}=\lambda_{ABC}{}^{DE}
+\mu_{(AB}{}^{(D}\delta_{C)}{}^{E)}
+\nu_{(A}\delta_B{}^D\delta_{C)}{}^E,\end{equation}
where $\lambda_{ABCDE}$ and $\mu_{ABC}$ are symmetric spinors and compute
\[2\Gamma_{(AB}{}^E{}_C{}^G\omega_{D)EG}
-\Gamma_{E(AB}{}^E{}^G\omega_{CD)G}\]
for each term on the right hand side of~(\ref{split}). Clearly, this entails
computing
\begin{equation}\label{entrails}
\Gamma_{(AB}{}^{(E}{}_{C)}{}^{G)}\qquad\mbox{and}\qquad
\Gamma_{EAB}{}^{EG}.\end{equation}

Firstly, if $\Gamma_{ABC}{}^{DE}=\lambda_{ABC}{}^{DE}$, where
$\lambda_{ABCDE}=\lambda_{(ABCDE)}$, then the second term in~(\ref{entrails})
vanishes so for (\ref{induced_operator}) we end up with
\[\omega_{DEF}\longmapsto\nabla_{(AB}{}^E\omega_{CD)E}
+2\lambda_{(ABC}{}^{EG}\omega_{D)EG},\]
which is perfect for eliminating the $\gtwo{5}{-3}$-component of partial
torsion.

Secondly, if $\Gamma_{ABC}{}^{DE}=\mu_{(AB}{}^{(D}\delta_{C)}{}^{E)}$, where
$\mu_{ABC}=\mu_{(ABC)}$, then straightforward spinor computations show that
\[\textstyle\Gamma_{(AB}{}^{(E}{}_{C)}{}^{G)}
=\frac16\mu_{(AB}{}^{(E}\delta_{C)}{}^{G)}\qquad\mbox{and}\qquad
\Gamma_{EAB}{}^{EG}=\frac56\mu_{AB}{}^G\]
so for (\ref{induced_operator}) we end up with
\begin{align*} \omega_{DEF}& \mapsto
\nabla_{(AB}{}^E\omega_{CD)E}
+\tfrac13\mu_{(AB}{}^E\omega_{CD)E}
-\tfrac56\mu_{(AB}{}^G\omega_{CD)G}\\
& =\nabla_{(AB}{}^E\omega_{CD)E}
-\tfrac12\mu_{(AB}{}^E\omega_{CD)E},\end{align*}
which is perfect for eliminating the $\gtwo{3}{-2}$-component of partial
torsion.

Thirdly, if $\Gamma_{ABC}{}^{DE}=\nu_{(A}\delta_B{}^D\delta_{C)}{}^E$, then
straightforward spinor calculations yield
\[\textstyle\Gamma_{(AB}{}^{(E}{}_{C)}{}^{G)}
=-\frac13\nu_{(A}\delta_B{}^E\delta_{C)}{}^G\qquad\mbox{and}\qquad
\Gamma_{EAB}{}^{EG}=\frac43\nu_{(A}\delta_{B)}{}^G\]
so for~(\ref{induced_operator}) we end up with
\[\textstyle\omega_{DEF}\mapsto\nabla_{(AB}{}^E\omega_{CD)E}
-\frac23\nu_{(A}\omega_{BCD)}
-\frac43\nu_{(A}\omega_{BCD)}
=\nabla_{(AB}{}^E\omega_{CD)E}
-2\nu_{(A}\omega_{BCD)},\]
which is perfect for eliminating the $\gtwo{1}{-1}$-component of partial
torsion.
\end{proof}

Several remarks are in order. Firstly, the preferred connection of
Proposition~\ref{three} is constructed by eliminating all but the
$\gtwo{7}{-4}$-component of the partial torsion of the induced partial
connection on $\Wedge_H^1$, decomposed according to
(\ref{partial_torsion_decomposition}). In fact, it is clear from the proof that
the component lying in $\gtwo{7}{-4}$ is the same for any choice of partial
connection on $\gtwo{1}{0}$ and is, therefore, an invariant of the structure.
It is called the {\em torsion} of our $G_2$ contact structure. In the general
theory of parabolic geometry~\cite{CS}, this is the only component of
{\em harmonic curvature} and is therefore the only obstruction to
{\em local flatness}, i.e., to being locally isomorphic to the flat
model~$G_2/P$. Secondly, we should point out that the spinor identities
established by direct calculation in our proof can be avoided by judicious use
of Lie algebra cohomology (as in done in~\cite{CS}). Thirdly, we note that a
{\em scale}, a nowhere vanishing section~$\sigma$ of~$\gtwo{0}{1}$, has a
nice geometric interpretation. Since $\gtwo{0}{-1}=L\hookrightarrow\Wedge^1$
is the bundle of contact forms, we can interpret~$\sigma^{-1}$ as a~choice of
contact form. In other words, the preferred partial connection on the spin
bundle $S=\gtwo{1}{0}$ is obtained in the presence of a contact form.

The transformation law for preferred partial connections in $G_2$ contact
geometry is obtained by analogy with Proposition~\ref{two}. Its proof will
therefore be omitted.
\begin{prop} Let us change scale $\sigma\in\Gamma\big({\Wedge}^0[1]\big)$ by
$\widehat\sigma=\Omega^{-1}\sigma$. Then, for $\phi_D\in\Gamma\big(\gtwo{1}{0}\big)$,
\begin{equation}\label{change_of_G2_spin_connection}
\widehat\nabla_{ABC}\phi_D=\nabla_{ABC}\phi_D+\Upsilon_{ABC}\phi_D
-\Upsilon_{D(AB}\phi_{C)},\qquad\mbox{where} \quad
\Upsilon_{ABC}\equiv\Omega^{-1}\nabla_{ABC}\Omega.\end{equation}
\end{prop}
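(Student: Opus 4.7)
The plan is to follow the template of the proof of Proposition~\ref{two}: define $\widehat\nabla$ by~(\ref{change_of_G2_spin_connection}) and verify both characterising properties of Proposition~\ref{three} for the scale $\widehat\sigma=\Omega^{-1}\sigma$; uniqueness in Proposition~\ref{three} then forces $\widehat\nabla$ to be the preferred partial connection for $\widehat\sigma$. The partial Leibniz rule for $\widehat\nabla$ is immediate from that of $\nabla$ together with the zero-order nature of the correction terms in~(\ref{change_of_G2_spin_connection}).

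For the first condition, I would extend $\widehat\nabla$ by Leibniz to a section $\sigma_{DE}\in\Gamma\big({\Wedge}^2\big(\gtwo{1}{0}\big)\big)=\Gamma\big(\gtwo{0}{1}\big)$, obtaining
\[\widehat\nabla_{ABC}\sigma_{DE}-\nabla_{ABC}\sigma_{DE}=2\Upsilon_{ABC}\sigma_{DE}-\Upsilon_{D(AB}\sigma_{C)E}+\Upsilon_{E(AB}\sigma_{C)D}.\]
The decisive identity $\Upsilon_{D(AB}\epsilon_{C)E}-\Upsilon_{E(AB}\epsilon_{C)D}=\Upsilon_{ABC}\epsilon_{DE}$ follows by applying the two-spinor Schouten identity $X_D\epsilon_{CE}-X_E\epsilon_{CD}=X_C\epsilon_{DE}$ to $X_C=\Upsilon_{CAB}$ and symmetrising over $A,B,C$ using the full symmetry of $\Upsilon_{ABC}$. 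This collapses the displayed line to the density transformation $\widehat\nabla_{ABC}\sigma=\nabla_{ABC}\sigma+\Upsilon_{ABC}\sigma$, from which $\widehat\nabla\rho=\nabla\rho+w\Upsilon\rho$ on $\rho\in\Gamma\big(\gtwo{0}{w}\big)$ follows multiplicatively. Substituting $\widehat\sigma=\Omega^{-1}\sigma$ with $\nabla\sigma=0$ then gives $\widehat\nabla\widehat\sigma=0$, exactly as after~(\ref{weight1}) in three dimensions.

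For the second condition, I would show directly that the partial torsion is \emph{unchanged}, so in particular remains in~$\gtwo{7}{-4}$. Extending Leibniz to $\omega_{DEF}\in\Gamma\big(\gtwo{3}{-2}\big)=\Gamma\big({\Wedge}_H^1\big)$ yields
\[\widehat\nabla_{ABC}\omega_{DEF}-\nabla_{ABC}\omega_{DEF}=\Upsilon_{ABC}\omega_{DEF}-\Upsilon_{D(AB}\omega_{C)EF}-\Upsilon_{E(AB}\omega_{|D|C)F}-\Upsilon_{F(AB}\omega_{|DE|C)},\]
the coefficient~$1$ of $\Upsilon_{ABC}\omega_{DEF}$ arising as $3+(-2)$ from three spinor-index trace contributions and one density-weight contribution. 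Feeding this difference into the Rumin-like operator $\omega\mapsto\nabla_{(AB}{}^G\omega_{CD)G}$ of~(\ref{induced_operator})---contracting one $\widehat\nabla$-index against one $\omega$-index via $\epsilon^{GH}$ and symmetrising over $A,B,C,D$---each of the four terms on the right reduces, after exploiting the antisymmetry of $\epsilon$ (which annihilates the symmetric traces $\epsilon^{GH}\omega_{GH\cdot}$), to a multiple of the single invariant $\Upsilon_{(AB}{}^E\omega_{CD)E}$, and those multiples sum to $\tfrac43-\tfrac43=0$. Hence the partial torsion is unchanged, still lies in $\gtwo{7}{-4}$, and uniqueness concludes the proof. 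The main obstacle is this final bookkeeping: unlike the three-dimensional case where the analogue collapses to a simple two-index antisymmetrisation, here one must track four distinct $\Upsilon$-terms against the fourfold symmetrisation of $A,B,C,D$, using the symmetry of $\Upsilon_{ABC}$, the antisymmetry of $\epsilon$ on symmetric spinors, and the fact that any unordered pairing of $\{A,B,C,D\}$ gives the same symmetrised invariant.
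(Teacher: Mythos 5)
Your proposal is correct and is precisely the argument the paper intends: the paper omits the proof, deferring to ``analogy with Proposition~\ref{two}'', and you carry out exactly that analogy --- defining $\widehat\nabla$ by~(\ref{change_of_G2_spin_connection}), checking the two characterising properties of Proposition~\ref{three} for $\widehat\sigma$, and invoking uniqueness. Your bookkeeping checks out: the Schouten identity collapses the skew two-index corrections to the weight-one transformation law, and in the torsion computation the contributions $1$, $-\tfrac23$, $-\tfrac23$, $+\tfrac13$ to the coefficient of $\Upsilon_{(AB}{}^E\omega_{CD)E}$ (the only component that can arise, since contracting two indices of the symmetric $\Upsilon_{ABC}$ gives zero) indeed cancel, so the induced operator $\Wedge_H^1\to\Wedge_{H\perp}^2$ and hence the partial torsion are unchanged.
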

As an immediate consequence of (\ref{change_of_G2_spin_connection}), if
$\phi_D\in\Gamma\big(\gtwo{1}{0}\big)$, then
$\widehat\nabla_{(ABC}\phi_{D)}=\nabla_{(ABC}\phi_{D)}$. Furthermore, if
$\phi_D\in\Gamma\big(\gtwo{1}{-4/3}\ \big)$, then
\[\textstyle\widehat\nabla_{ABC}\phi_D
=\nabla_{ABC}\phi_D-\frac13\Upsilon_{ABC}\phi_D-\Upsilon_{D(AB}\phi_{C)},\]
whence $\widehat\nabla_{AB}{}^C\phi_C=\nabla_{AB}{}^C\phi_C$. We have found
two {\em invariant operators}
\begin{equation}\label{basic_invariant_operators}
\gtwo{1}{0}\longrightarrow\gtwo{4}{-3}\qquad\mbox{and}\qquad
\gtwo{1}{-4/3}\;\longrightarrow\gtwo{2}{-7/3}\ ,\end{equation}
defined only in terms of the $G_2$ contact structure itself. Conversely, it is
easy to see that the existence of these two operators is sufficient to define
the preferred partial connection on $\gtwo{1}{0}$ associated with a scale and
to capture the transformation law~(\ref{change_of_G2_spin_connection}).
Sadly, we have not been able to manufacture either of the invariant operators
(\ref{basic_invariant_operators}) directly.

In three-dimensional conformal spin geometry, the transformation law
(\ref{change_of_spin_connection}) leads to a pair of basic first-order
invariant differential operators
\[\btwo{1}{0}\longrightarrow\btwo{3}{-2}\qquad\mbox{and}\qquad
\btwo{1}{-3/2}\;\longrightarrow\btwo{1}{-5/2}\ ,\]
given by $\phi_C\mapsto\nabla_{(AB}\phi_{C)}$ and
$\phi_C\mapsto\nabla_{AB}\phi^B$. This suggests that we should refer to the
opera\-tors~(\ref{basic_invariant_operators}) as the `twistor' or `tractor'
operator and `Dirac' operator, respectively, on a $G_2$ contact manifold. Sure
enough, this tractor operator is overdetermined and, in the flat case, has a
$7$-dimensional kernel corresponding to the embedding
$G_2\hookrightarrow{\mathrm{SO}}^\uparrow(4,3)$. The prolongation of this
tractor operator (leading to the {\em standard tractor} bundle) is detailed
in~\cite{Moy}.

\section{Legendrean contact geometry in five dimensions}\label{CL}
A Legendrean contact geometry in five dimensions is a $5$-dimensional contact
manifold equipped with a splitting of the contact distribution as two
rank~$2$ subbundles
\[H=E\oplus F\]
each of which is null with respect to the Levi form
(so that the Levi form reduces to a perfect pairing $E\otimes F\to L^*$). The
flat model naturally arises in \cite[Section~2]{EN1} as the moduli space of flying
saucers in `attacking mode' and \cite[Proposition ~2.5]{EN1} gives
the $15$-dimensional symmetry algebra.

In the spirit of previous sections our aim will be, in the presence of a~scale (equivalently, a~choice of contact form), to construct preferred partial
connections on all the natural irreducible bundles on such a geometry. As
before, these connections can be obtained by means of the canonical
differential operators present on this type of geometry. In fact, we shall
only need to examine the Rumin complex to find sufficiently many canonical
differential operators for these purposes.

A Legendrean contact geometry is a type of parabolic
geometry~\cite[Section~4.2.3]{CS}, specifically \raisebox{1pt}{$\xox{}{}{}$} in the
notation of~\cite{BE}. Then
\[H=\begin{array}{c}E\\ \oplus\\ F\end{array}=
\begin{array}{c}
\xox{1}{1}{-1}\\
\oplus\\
\xox{-1}{1}{1}.
\end{array}\]
The general irreducible bundle has the form $\xox{u}{k}{v}$ for
$k\in{\mathbb{Z}}_{\geq0}$ and $u,v,\in{\mathbb{R}}$. It is convenient to take
$S\equiv\xox{-1}{1}{-1}$ as our basic `spin' bundle. If we also let
$\Wedge^0[u,v]\equiv\xox{u}{0}{v}$, then $L=\Wedge^0[-1,-1]$ and the general
irreducible bundle is $\xox{u}{k}{v}=\bigodot^k\!S[u+k,v+k]$. Notice that
\[E\otimes F
=\xox{1}{1}{-1}\;\otimes\;\xox{-1}{1}{1}=\xox{0}{2}{0}\oplus\xox{1}{0}{1}\]
and the Levi form is built into the notation as projection onto the second
summand. \big(Without the Dynkin diagram notation, it follows from the perfect
pairing $E\otimes F\to L^*$ and the canonical identification $E^*=E\otimes
\det E^*$ that
\[E\otimes(\det E^*)^{1/2}=\big(E\otimes(\det E^*)^{1/2}\big)^*
=F\otimes(\det F^*)^{1/2}\]
and we may take $S\equiv E\otimes(\det E^*)^{1/2}\otimes L^{1/2}
\equiv F\otimes(\det F^*)^{1/2}\otimes L^{1/2}$.\big)

In order to avoid confusion, by default we shall write a section of
$\xox{u}{k}{v}$ with {\em lower} spinor indices
\[\phi_{\mbox{\scriptsize$\underbrace{AB\dots C}_k$}}=\phi_{(AB\dots C)}
\in\Gamma\big(\xox{u}{k}{v}\big)\]
with no special terminology to record the bundle of which it is a section (in
other words, we shall forgo any systematic notion of `weight'). Of course, we
may use the tautological identification $S=S^*\otimes\det S=S^*\otimes L$ to
replace lower spinor indices by upper spinor indices (with an appropriate
change in `weight' if we were to assign one) so there is no loss in using lower
indices by default. As an example of these conventions in action, we may write
the first operator ${\rm d}_\perp\colon \Wedge^0\to\Wedge_H^1$ in the Rumin complex as
\begin{equation}\label{first_rumin}
f\longmapsto {\rm d}_\perp f\equiv\left[\!\begin{array}{c}\nabla_Af\\
\bar\nabla_Af\end{array}\!\right]
\in\begin{array}{c}\xox{-2}{1}{0}\\[-3pt] \oplus\\ \xox{0}{1}{-2}\end{array}
=\begin{array}{c}E^*\\[-3pt] \oplus\\ F^*\end{array}=\Wedge^1_H,
\end{equation}
where $\nabla_A$, respectively~$\bar\nabla_A$, is the directional derivative in
the~$E$, respectively~$F$, direction, both of which are manifestly invariantly
defined. (Although the notion of Legendrean contact geometry pertains in any
odd dimension, it is only in five dimensions that one has the convenience of
spinors and, in particular, that the bundles $E$ and $F$ agree save for a line
bundle factor.) Of course, the directional derivatives $\nabla_Af$ and
$\bar\nabla_Af$ end up as sections of different bundles even though each of
them has a single spinor index.

Soon (as with all parabolic geometries~\cite{CSweyl}), we shall find it
convenient to work in a particular {\em scale}, i.e., with a nowhere vanishing
section $\sigma\in\Gamma\big(\xox{1}{0}{1}\big)$. As with all parabolic contact
structures~\cite[Section~4.2]{CS}, we may interpret the section $\sigma^{-1}$ of
$\,\xox{-1}{0}{-1}\,=L\hookrightarrow\Wedge^1$ as a choice of contact form.
For $5$-dimensional Legendrean contact geometry, however, we may also interpret
a scale as a choice of skew spinor $\epsilon_{AB}$ by dint of
\begin{equation}\label{epsilon}\sigma^{-1}\in\Gamma\big(\,\xox{-1}{0}{-1}\,\big)
=\Gamma\big(\Wedge^2\big(\,\xox{-1}{1}{-1}\,\big)\big)=\Gamma\big(\Wedge^2S\big)
\ni\epsilon_{AB},\end{equation} which we may use to raise and lower spinor
indices with the familiar conventions of~\cite{OT}. In particular, for
$\phi_A\in\Gamma\big(\,\xox{-1}{1}{-1}\,\big)$ and $\psi_A\in\Gamma\big(\xox{0}{1}{0}\big)$, we
may write
\[\textstyle\sigma^{-1}\phi_{[A}\psi_{B]}=\frac12\epsilon_{AB}\psi^C\phi_C
\qquad\mbox{to define }\psi^C\phi_C\in\Gamma\big(\Wedge^0\big),\]
independent of choice of $\sigma$ and identifying
$\big(\,\xox{-1}{1}{-1}\,\big)^*=\xox{0}{1}{0}$, as expected.

Continuing from (\ref{first_rumin}), we may decompose the entire Rumin complex
into its constituent parts via the usual spin-bundle decompositions to obtain
an array of invariantly defined linear differential operators
\begin{gather}\label{array_of_operators}
\raisebox{-40pt}{\setlength{\unitlength}{.95pt}
\begin{picture}(410,120)(0,-25)
\put(0,50){\makebox(0,0){\xox{0}{0}{0}}}
\put(80,70){\makebox(0,0){\xox{-2}{1}{0}}}
\put(80,30){\makebox(0,0){\xox{0}{1}{-2}}}
\put(160,90){\makebox(0,0){\xox{-3}{0}{1}}}
\put(160,50){\makebox(0,0){\xox{-2}{2}{-2}}}
\put(160,10){\makebox(0,0){\xox{1}{0}{-3}}}
\put(240,90){\makebox(0,0){\xox{-4}{0}{0}}}
\put(240,50){\makebox(0,0){\xox{-3}{2}{-3}}}
\put(240,10){\makebox(0,0){\xox{0}{0}{-4}}}
\put(320,70){\makebox(0,0){\xox{-4}{1}{-2}}}
\put(320,30){\makebox(0,0){\xox{-2}{1}{-4}}}
\put(400,50){\makebox(0,0){\xox{-3}{0}{-3}}}
\put(30,49){\vector(2,1){25}}
\put(30,43){\vector(2,-1){25}}
\put(110,69){\vector(2,1){25}}
\put(110,63){\vector(2,-1){25}}
\put(110,29){\vector(2,1){25}}
\put(110,23){\vector(2,-1){25}}
\put(190,86){\vector(1,0){25}}
\put(190,46){\vector(1,0){25}}
\put(190,6){\vector(1,0){25}}
\put(190,83){\vector(3,-4){25}}
\put(190,83){\vector(3,-4){25}}
\put(190,49){\vector(3,4){25}}
\put(190,43){\vector(3,-4){25}}
\put(190,9){\vector(3,4){25}}
\put(270,83){\vector(2,-1){25}}
\put(270,49){\vector(2,1){25}}
\put(270,43){\vector(2,-1){25}}
\put(270,9){\vector(2,1){25}}
\put(350,63){\vector(2,-1){25}}
\put(350,29){\vector(2,1){25}}
\put(202.5,-13){\makebox(0,0){\framebox{second-order operators}}}
\put(202.5,-4){\vector(0,1){7}}
\put(46,-13){\makebox(0,0){\framebox{first-order operators}}}
\put(42.5,-4){\vector(0,1){32}}
\put(100,-13){\line(1,0){20}}
\put(120,-13){\vector(0,1){20}}
\put(359,-13){\makebox(0,0){\framebox{first-order operators}}}
\put(362.5,-4){\vector(0,1){32}}
\put(305,-13){\line(-1,0){20}}
\put(285,-13){\vector(0,1){20}}
\end{picture}}\!\!\!\!\!
\end{gather}
In this diagram we have omitted arrows that correspond to homomorphisms. For
example one can check that the part of the Rumin complex
$\xox{-2}{1}{0}\to\xox{1}{0}{-3}$ is actually a homomorphism, and its vanishing
is equivalent to the integrability of~$F$. As already noted, the operator
\[\nabla_A\colon \ \xox{0}{0}{0}\to\xox{-2}{1}{0}\]
is just the directional derivative on functions in the $E$ direction:
$f \mapsto {\rm d}f|_E$.
More generally, the partial connections we aim to construct naturally split
into a part that differentiates along~$E$ (which we shall denote by~$\nabla_A$)
and a part that differentiates along~$F$ (denoted by~$\bar\nabla_A$). In
particular, amongst the invariant operators in (\ref{array_of_operators}) we
find $\nabla_A\colon \xox{0}{0}{-4}\to\xox{-2}{1}{-4}$ and, therefore, by insisting
on the Leibniz rule, invariantly defined derivatives in the $E$ direction
\[\nabla_A\colon \ \xox{0}{0}{v}\to\xox{-2}{1}{0}\otimes\xox{0}{0}{v},\qquad
\mbox{for all}\ v\in{\mathbb{R}}.\]
Now suppose we are given a nowhere vanishing {\em scale}
$\sigma\in\Gamma\big(\xox{1}{0}{1}\big)$. Then we can define
\begin{align}\label{densities}
\nabla_A\colon \ \xox{u}{0}{v} \to \xox{-2}{1}{0} \otimes \xox{u}{0}{v},\qquad
\mbox{for all}\ u,v\in{\mathbb{R}}
\end{align}
by $\nabla_A(f\sigma^u)\equiv(\nabla_A f)\sigma^u$ for smooth sections $f$ of
$\xox{0}{0}{v-u}$\;. We may compute how this operator changes under a change of
scale $\widehat\sigma=\Omega^{-1}\sigma$, for some nowhere vanishing
smooth function~$\Omega$. Firstly, note that
$\nabla_A(\Omega^{-u}f)=\Omega^{-u}(\nabla_A f-u\Upsilon_Af)$, where
$\Upsilon_A\equiv\Omega^{-1}\nabla_A\Omega$.
Hence,
\begin{align*}
\widehat\nabla_A(f\widehat\sigma^u)& \equiv (\nabla_A f)\widehat\sigma^u=
\Omega^{-u}(\nabla_A f)\sigma^u\\
& = \nabla_A(\Omega^{-u}f)\sigma^u+u\Upsilon_Af\widehat\sigma^u
=\nabla_A(\Omega^{-u}f\sigma^u)+u\Upsilon_Af\widehat\sigma^u
\end{align*}
and, writing $s=f\widehat\sigma^u\in\Gamma\big(\xox{u}{0}{v}\big)$, we obtain
\[\widehat\nabla_A s=\nabla_A s+u\Upsilon_As.\]
In particular, this transformation law records the invariance of $\nabla_A$
when $u=0$. Similarly, starting with
$\bar\nabla_A\colon \xox{-4}{0}{0}\to\xox{-4}{1}{-2}$ from
(\ref{array_of_operators}), in the presence of a scale $\sigma$, we obtain
\[\bar\nabla_A\colon \ \xox{u}{0}{v}\to\xox{0}{1}{-2}\otimes\xox{u}{0}{v},
\qquad\mbox{for all}\ u,v\in{\mathbb{R}}\]
and, under change of scale $\widehat\sigma=\Omega^{-1}\sigma$, we find
that
\[\widehat{\bar\nabla}_As=\bar\nabla_A s+v\bar\Upsilon_As,\]
where $\bar\Upsilon_A\equiv\Omega^{-1}\bar\nabla_A\Omega$.

Referring back to the Rumin complex (\ref{array_of_operators}) we also
have canonical differential operators (in the $E$ direction)
\begin{equation}\label{more_canonical_operators}
\xox{0}{1}{-2} \to \xox{-2}{2}{-2}\qquad\mbox{and}\qquad
\xox{-2}{1}{0} \to \xox{-3}{0}{1},
\end{equation}
which may be combined, via the Leibniz, rule with (\ref{densities}) to obtain,
in the presence of a scale, a~first-order differential operator
\[
\nabla_A\colon \ \xox{u}{1}{v}\to\ \xox{u-2}{2}{v}\;\oplus\ \xox{u-1}{0}{v+1}
\ =\;\xox{-2}{1}{0}\otimes\xox{u}{1}{v}.\]
To express the transformation of this operator under change of scale,
we may split it as
\[\phi_B\mapsto\nabla_A\phi_B=\nabla_{(A}\phi_{B)}+\nabla_{[A}\phi_{B]},\]
and recall that operator (\ref{densities}) on densities
$s\in\Gamma\big(\xox{u}{0}{v}\big)$ transforms as
\begin{equation}\label{transform_on_densities}
\widehat\nabla_A s=\nabla_A+u\Upsilon_A s,\qquad\mbox{where}\quad
\Upsilon_A\equiv\Omega^{-1}\nabla_A\Omega.\end{equation}

\begin{prop} Suppose we change scale $\sigma\in\Gamma\big(\xox{1}{0}{1}\big)$ by
$\widehat\sigma=\Omega^{-1}\sigma$. Then, for $\phi_B$ a~section of
$\xox{u}{1}{v}$ we have
\begin{equation}\label{full_transformation_law}\widehat\nabla_A\phi_B
=\nabla_A\phi_B+(u+1)\Upsilon_A\phi_B-\Upsilon_B\phi_A.\end{equation}
\end{prop}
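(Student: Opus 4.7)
The plan is to verify (\ref{full_transformation_law}) piecewise, by splitting $\nabla_A\phi_B$ into its symmetric and skew parts in the indices $A,B$ and confirming the correct transformation law for each. Taking symmetric and skew parts of the proposed formula gives the equivalent pair of identities
\begin{align*}
\widehat\nabla_{(A}\phi_{B)} &= \nabla_{(A}\phi_{B)} + u\Upsilon_{(A}\phi_{B)},\\
\widehat\nabla_{[A}\phi_{B]} &= \nabla_{[A}\phi_{B]} + (u+2)\Upsilon_{[A}\phi_{B]}.
\end{align*}
Each of these is controlled by exactly one of the two invariant operators in (\ref{more_canonical_operators}) together with the density transformation (\ref{transform_on_densities}).

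For the symmetric identity I would use the decomposition $\xox{u}{1}{v}=\xox{0}{1}{-2}\otimes\xox{u}{0}{v+2}$, so that $\phi_B$ is locally a sum of products $\chi_B\, s$ with $\chi_B\in\Gamma\big(\xox{0}{1}{-2}\big)$ and $s\in\Gamma\big(\xox{u}{0}{v+2}\big)$. The symmetric component $\phi_B\mapsto\nabla_{(A}\phi_{B)}$ is by construction the Leibniz extension of the invariant Rumin operator $\xox{0}{1}{-2}\to\xox{-2}{2}{-2}$ across the density factor; applying $\widehat\nabla$ and combining the scale-invariance of that Rumin operator with (\ref{transform_on_densities}) for $s$, which carries first-slot weight $u$, delivers $u\Upsilon_{(A}\phi_{B)}$ as the only scale-dependent term, matching the claim.

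For the skew identity I would instead use the complementary decomposition $\xox{u}{1}{v}=\xox{-2}{1}{0}\otimes\xox{u+2}{0}{v}$, writing $\phi_B=\eta_B\, t$ with $\eta_B\in\Gamma\big(\xox{-2}{1}{0}\big)$ and $t\in\Gamma\big(\xox{u+2}{0}{v}\big)$. Invariance of the Rumin operator $\xox{-2}{1}{0}\to\xox{-3}{0}{1}$ together with (\ref{transform_on_densities}), this time with first-slot weight $u+2$ on $t$, produces $(u+2)\Upsilon_{[A}\phi_{B]}$.

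The main subtlety is the choice of factorization: the skew part of $\nabla_A\phi_B$ is not visible via an invariant operator in the first factorization, and the symmetric part is not visible in the second, so one must pair each piece with the appropriate Rumin operator before reassembling into the full transformation law. Once this bookkeeping is in place, the proof reduces to two straightforward applications of the Leibniz rule, in direct parallel with the verification used for Proposition~\ref{two}.
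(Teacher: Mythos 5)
Your proposal is correct and follows essentially the same route as the paper: the paper's proof likewise observes that the transformation law is exactly the one consistent with the invariance of the two Rumin operators $\phi_B\mapsto\nabla_{(A}\phi_{B)}$ and $\phi_B\mapsto\nabla_{[A}\phi_{B]}$ from~(\ref{more_canonical_operators}) together with the density law~(\ref{transform_on_densities}). You merely make explicit the tensor-product factorisations and the symmetric/skew bookkeeping that the paper leaves implicit, and your weight counts ($u$ on the symmetric part, $u+2$ on the skew part) are right.
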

\begin{proof}
It suffices to note that this transformation law is consistent with the
invariance of the operators~(\ref{more_canonical_operators}), which may be
written as
\[\phi_B\longmapsto\nabla_{(A}\phi_{B)}\qquad\mbox{and}\qquad
\phi_B\longmapsto\nabla_{[A}\phi_{B]}\]
and also with (\ref{transform_on_densities}) on densities.
\end{proof}

Similarly, from the canonical operators
\[\bar\nabla\colon \ \xox{-2}{1}{0}\to\xox{-2}{2}{-2}\qquad\mbox{and}\qquad
\bar\nabla\colon \ \xox{0}{1}{-2}\to\xox{1}{0}{-3},\]
from (\ref{array_of_operators}) we may construct, in the presence of a scale,
\[\Gamma\big(\xox{u}{1}{v}\big)\ni\phi_B\longmapsto\bar\nabla_A\phi_B\]
differentiating in the $F$ direction and transforming by
\begin{equation}\label{full_barred_transformation_law}
\widehat{\bar\nabla}_A\phi_B=\bar\nabla_A\phi_B
+(v+1)\bar\Upsilon_A\phi_B
-\bar\Upsilon_B\phi_A,
\qquad\mbox{where}\quad \bar\Upsilon_A\equiv\Omega^{-1}\bar\nabla_A\Omega.
\end{equation}

Finally, we may combine $\nabla_A$ and $\bar\nabla_A$ to define, in the
presence of a scale $\sigma\in\Gamma\big(\xox{1}{0}{1}\big)$, partial connections
\[\Gamma(\xox{u}{1}{v})\ni\phi_B\to
\left[\!\begin{array}{c}\nabla_A\phi_B\\[3pt]
\bar\nabla_A\phi_B\end{array}\!\right]
\in\begin{array}{c}\xox{-2}{1}{0}\\[-2pt] \oplus\\[-1pt]
\xox{0}{1}{-2}\end{array}\!\!\otimes\xox{u}{1}{v}
=\Wedge_H^1\otimes\xox{u}{1}{v}\]
and, indeed by the Leibniz rule, on all weighted spinor bundles. These partial
connections are generated by $\Wedge_H^1\to\Wedge_H^1\otimes\Wedge_H^1$ and
this basic one is characterised as follows.

\begin{prop}\label{prop6}
Let $\sigma$ be a nowhere vanishing section of $\xox{1}{0}{1}$,
equivalently a choice of contact form. Then, there is a unique partial
connection $\nabla_H\colon \Wedge_H^1\to\Wedge_H^1\otimes\Wedge_H^1$ so that the
induced partial connection on $\Wedge_H^4=\xox{-2}{0}{-2}$ annihilates
$\sigma^{-2}$ and so that~$\nabla_H$ has minimal partial torsion in the sense
that the induced operator $\Wedge_H^1\to\Wedge_{H\perp}^2$ agrees with the
Rumin operator~${\rm d}_\perp$ modulo the homomorphisms that are the obstructions to
integrability.
\end{prop}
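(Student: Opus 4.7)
My plan is to verify that the partial connection $\nabla_H$ already constructed in the preceding paragraphs—namely the one obtained by combining $\nabla_A$ and $\bar\nabla_A$ on the two summands $E^* = \xox{-2}{1}{0}$ and $F^* = \xox{0}{1}{-2}$ of $\Wedge_H^1$—satisfies both characterising conditions, and then to establish uniqueness by a decomposition argument. For the first condition, note that $\Wedge_H^4 = \det E^* \otimes \det F^* = \xox{-3}{0}{1} \otimes \xox{1}{0}{-3} = \xox{-2}{0}{-2}$, so $\sigma^{-2}$ is a section of $\Wedge_H^4$. Writing $\sigma^{-2} = 1 \cdot \sigma^{-2}$ with the constant $f = 1$, the definition~(\ref{densities}) immediately gives $\nabla_A \sigma^{-2} = 0 = \bar\nabla_A \sigma^{-2}$, so the induced partial connection on $\Wedge_H^4$ annihilates $\sigma^{-2}$.

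For the second condition, the induced operator $\nabla_\perp \colon \Wedge_H^1 \to \Wedge_{H\perp}^2$ obtained by antisymmetrising $\nabla_H$ and projecting is built from exactly the same $\nabla_A$ and $\bar\nabla_A$ appearing as canonical first-order operators in the array~(\ref{array_of_operators}). Its components into $\xox{-2}{2}{-2}$, into $\xox{-3}{0}{1}$ from $E^*$, and into $\xox{1}{0}{-3}$ from $F^*$ therefore coincide with the corresponding pieces of ${\rm d}_\perp$. The remaining two components of ${\rm d}_\perp$—from $E^*$ to $\xox{1}{0}{-3}$ and from $F^*$ to $\xox{-3}{0}{1}$—are absent from $\nabla_\perp$, simply because antisymmetrising a section of $E^* \otimes \Wedge_H^1$ cannot produce a section of $\Wedge^2 F^*$ and symmetrically on the other side. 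But these are precisely the integrability-obstruction homomorphisms that the statement permits us to ignore.

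For uniqueness, any second partial connection $\widetilde\nabla_H$ satisfying both conditions differs from $\nabla_H$ by a tensor $\Gamma \in \Wedge_H^1 \otimes \End(\Wedge_H^1)$. The first condition forces $\Gamma$ to be trace-free in its endomorphism factor, since the trace on $\Wedge_H^1$ determines the induced action on $\Wedge_H^4 = \det \Wedge_H^1$. The second condition then requires the antisymmetrisation map
\[ \Wedge_H^1 \otimes \End_0(\Wedge_H^1) \longrightarrow \Hom\bigl(\Wedge_H^1, \Wedge_{H\perp}^2\bigr) \]
to carry $\Gamma$ into the two integrability-homomorphism summands only. The main technical hurdle is to verify that, restricted to the complement of those two summands in its target, this map is injective—a decomposition-of-irreducibles calculation in the spirit of the spinor computations in the proof of Proposition~\ref{three}, now split into the various pieces of $\End_0(\Wedge_H^1)$ corresponding to $\End(E^*)$, $\End(F^*)$, $\Hom(E^*,F^*)$ and $\Hom(F^*,E^*)$. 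Alternatively one may bypass this direct check by appealing to the general Weyl-structure machinery for parabolic geometries of \cite{CSweyl,CS}, which guarantees such a uniqueness abstractly.
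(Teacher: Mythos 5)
Your existence check is fine and is essentially what the paper intends, but your uniqueness argument hinges on a step that is not merely left unverified but is false as you have formulated it. You let the difference tensor $\Gamma$ range over all of $\Wedge_H^1\otimes\End_0\big({\Wedge}_H^1\big)$ and then ask that the antisymmetrise-and-project map into $\Hom\big({\Wedge}_H^1,\Wedge_{H\perp}^2\big)$ be injective away from the two obstruction summands. A rank count rules this out: the source has rank $4\times15=60$ while the full target has rank $4\times5=20$ (only $16$ once the obstruction summands are discarded). Concretely, any $\Gamma_{ab}{}^{c}$ that is symmetric in $a$, $b$ and trace-free over $b$, $c$ changes neither the induced connection on $\Wedge_H^4$ nor the induced operator $\Wedge_H^1\to\Wedge_{H\perp}^2$, so the two stated conditions cannot single out a connection within the class you allow. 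Uniqueness only has a chance after restricting to partial connections compatible with the Legendrean structure --- those respecting the splitting $\Wedge_H^1=E^*\oplus F^*$ and induced, via the Leibniz rule, from the spin bundle --- and your argument neither imposes nor exploits this restriction; parking the ``main technical hurdle'' in an unspecified decomposition calculation (or in an appeal to the general Weyl-structure theory, which likewise only gives uniqueness within the restricted class) hides exactly the point where the proof breaks.

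The paper's own proof takes a different and much shorter route that sidesteps the difference-tensor argument altogether: it observes that every ingredient used to \emph{build} the connection is already determined by ${\rm d}_\perp\colon\Wedge_H^1\to\Wedge_{H\perp}^2$ together with the scale. The only subtlety it addresses is that the construction also invoked the two density operators $\xox{0}{0}{-4}\to\xox{-2}{1}{-4}$ and $\xox{-4}{0}{0}\to\xox{-4}{1}{-2}$ from further along the Rumin complex, and these are identified as adjoints of operators visible in ${\rm d}_\perp$ restricted to $\Wedge_H^1$, hence carry no independent information. If you wish to keep your route, you must first pin down the admissible class of partial connections and then perform the injectivity check on the much smaller freedom $\Wedge_H^1\otimes\End_0(S)$, in genuine parallel with the spinor computation in Proposition~\ref{three}.
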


\begin{proof} Recall that we constructed this partial connection from the Rumin
complex (\ref{array_of_operators}) modulo the obstructions to integrability.
The only ingredients in this argument not immediately visible in
${\rm d}_\perp\colon \wedge_H^1\to\Wedge_{H\perp}^2$, where the two operators
\[\xox{0}{0}{-4}\to\xox{-2}{1}{-4}\qquad\mbox{and}\qquad
\xox{-4}{0}{0}\to\xox{-4}{1}{-2}\]
coming from further along the Rumin complex. However, it is straightforward to
check that, for example,
\[\xox{0}{0}{-4}=\Wedge^5\otimes\big(\,\xox{-3}{0}{1}\big)^*\longrightarrow
\Wedge^5\otimes\big(\,\xox{-2}{1}{0}\big)^*=\xox{-2}{1}{-4}\]
is the adjoint of $\;\xox{-2}{1}{0}\to\xox{-3}{1}{0}$ so these two hidden
ingredients are also secretly carried by
${\rm d}_\perp\colon \Wedge_H^1\to\Wedge_{H\perp}^2$.
\end{proof}

In fact, this proposition also follows from the general theory \cite{CS}, or by
a more explicit spinor calculation~\cite{Moy}. The prolongation of
$\xox{0}{1}{0}\ni\phi_B\longmapsto
\big(\nabla_{(A}\phi_{B)},\bar\nabla_{(A}\phi_{B)}\big)$ gives an especially convenient
tractor bundle and its (partial) connection~(cf.~\cite{CS,Moy}).

A more familiar way \cite{OT} of saying that $\nabla_A$ annihilates the scale
$\sigma\in\Gamma\big(\xox{1}{0}{1}\big)$ and hence
$\sigma^{-1}\in\Gamma\big(\,\xox{-1}{0}{-1}\,\big)$, is to say that
$\nabla_A\epsilon_{BC}=0$ for the corresponding skew form $\epsilon_{BC}$
under~(\ref{epsilon}). As a final consistency check, we may verify that this
constraint is invariant under~(\ref{full_transformation_law}), as follows. For
$\phi_B$ a section of $\,\xox{-1}{1}{-1}$, the transformation~(\ref{full_transformation_law}) reads
\[\widehat\nabla_A\phi_B=\nabla_A\phi_B-\Upsilon_B\phi_A\]
(cf.~conformal spin geometry in four
dimensions~\cite[equation~(5.6.15)]{OT}) and, therefore, if $\phi_{BC}$ is a
section of $\,\xox{-1}{1}{-1}\,\otimes\,\xox{-1}{1}{-1}\,$, then the Leibniz
rule implies that
\[\widehat\nabla_A\phi_{BC}
=\nabla_A\phi_{BC}-\Upsilon_B\phi_{AC}-\Upsilon_C\phi_{BA}.\]
Hence, if $\phi_{BC}$ is skew, then
$\Upsilon_A\phi_{BC}=\Upsilon_B\phi_{AC}+\Upsilon_C\phi_{BA}$ and it follows
that
\[\widehat\nabla_A\phi_{BC}=\nabla_A\phi_{BC}-\Upsilon_A\phi_{BC}.\]
Finally, when $\widehat\sigma=\Omega^{-1}\sigma$, we find that
$\widehat{\epsilon}_{AB}=\Omega\epsilon_{AB}$ (cf.~\cite[equation~(5.6.2)]{OT})
and, hence, that
\[\widehat\nabla_A\widehat\epsilon_{BC}
=\widehat\nabla_A(\Omega\epsilon_{BC})
=\Omega\big(\widehat\nabla_A\epsilon_{BC}+\Upsilon_A\epsilon_{BC}\big)
=\Omega\nabla_A\epsilon_{BC},\]
as required.

\section{Flying saucers via spinors}\label{saucers}
Distilling the construction in \cite{EN2} down to its key ingredients, we
will explain how to construct a~$G_2$ contact structure starting from a
Legendrean contact structure plus some additional data, a~choice of
appropriately weighted sections of the Legendrean subbundles. We then will
calculate the torsion of the resulting~$G_2$ contact structure in terms of the
input data and the preferred partial connection in the previous section.

Using the Levi form we may identify $F^* = E \otimes L = E[-1,-1]$ and so
can write
\[\Wedge_H^1 \ni \omega_a = \begin{bmatrix} \omega_A \\ \bar{\omega}^A
\end{bmatrix} \in \begin{array}{c}
\hspace{2mm}E^*\\[-3pt] \oplus\\ E[-1,-1]\end{array}. \]
In what follows we will fix a scale $\sigma \in \xox{1}{0}{1}$, or equivalently
a contact form. The expression for the torsion will turn out to be independent
of this choice, but to write it down we will need the distinguished connection
from the previous section. The distinguished partial connection annihilating
the scale can be written on $E^*$ as
\[E^* \ni \omega_A \mapsto \begin{bmatrix} \nabla_A\omega_B \\
\bar{\nabla}^A\omega_B \end{bmatrix} \in \begin{array}{c}
\hspace{2mm}E^*\\[-3pt] \oplus\\ E[-1,-1] \end{array}
\hspace{-2mm} \otimes E^*.
\]
Recall from Section~\ref{CL} that a choice of scale gives rise to a skew spinor
$\epsilon_{AB}$, covariant constant under the distinguished partial connection,
together with its inverse $\epsilon^{AB}$ (such that
$\epsilon_{AB}\epsilon^{AC}=\delta_B{}^C$), which we may use to raise and lower
indices as per \cite{OT}. Covariant constancy implies that we may equate
$\nabla_A \sigma^B = \nabla_A\big(\sigma_C\epsilon^{BC}\big)$ and
$(\nabla_A\sigma_C)\epsilon^{BC}$, as in the usual spinor calculus.

We decompose $\Wedge_{H\perp}^2
= \Wedge^2E^* \oplus (E^* \otimes E)_\circ[-1,-1] \oplus \Wedge^2E[-2,-2]$,
and Proposition~\ref{prop6} then means we can write the Rumin operator as
\[d_\perp
\begin{bmatrix}
\sigma_A \\
\tau^A
\end{bmatrix} = \begin{bmatrix}
 -\nabla^A\sigma_A + \Pi_A\tau^A \\
\big(\nabla_A\tau^B - \bar{\nabla}^B\sigma_A\big)_\circ \\
 \bar{\nabla}_A\tau^A + \Sigma^A\sigma_A
\end{bmatrix},
\]
where $\Sigma^A$ and $\Pi_A$ are the obstructions to integrability and
$\big(\Xi_A{}^B\big)_\circ = \Xi_A{}^B - \frac{1}{2}\delta_A{}^B\Xi_C{}^C$
for $\Xi_A{}^B \in E \otimes E^*$.

Now suppose we are given sections
$o_A\in E^*[2,-1]$ and $\iota^A\in\ E[-2,1]$ such that
$o_A\iota^A = 1$,
classically known as a {\em spin-frame}~\cite[pp.~110--115]{OT}. Then we
may define a $G_2$ contact geometry as follows. These sections determine an
isomorphism
\[\Wedge^0[0,3] \oplus \Wedge^0[1,2] \oplus \Wedge^0[2,1] \oplus \Wedge^0[3,0]
\cong
E^*[2,2] \oplus E[1,1]\]
given by
\begin{equation}\textstyle\label{defining_isomorphism}
(x,y,z,w)\longmapsto
\big(xo_A-\frac{1}{\sqrt{3}}y\iota_A,w\iota^A-\frac{1}{\sqrt{3}}zo^A\big).
\end{equation}
If we set
$S = \Wedge^0[0,1] \oplus \Wedge^0[1,0]$,
we have an isomorphism
\[\textstyle\bigodot^3\!S\cong E^*[2,2] \oplus E[1,1] = \Wedge_H^1[2,2]\]
as required in the definition of a $G_2$ contact structure. The peculiar
factors here are chosen so that the isomorphism also satisfies the additional
Levi-compatibility condition.

Starting by considering the general connection on $S$ that
annihilates the scale, we can calculate the torsion of the $G_2$ contact
structure by calculating the obstruction to the differential operator
$\Lambda_H^1 \to \Lambda_{H\perp}^2$ induced by the defining isomorphism being
equal to the Rumin operator.

We can write any partial connection on $S$ as
\begin{gather*}
\nabla_{a} \begin{bmatrix}
y \\
z
\end{bmatrix}
= \begin{bmatrix}
\nabla_{a} y + \kappa_{a}y + \lambda_{a}z \\
\nabla_{a} z + \mu_{a} y + \nu_{a} z
\end{bmatrix}
\end{gather*}
for appropriately weighted sections $\kappa_{a}$, $\lambda_{a}$, $\mu_{a}$,
$\nu_{a}$, where we will take $\nabla_a$ to be the partial connection
distinguished by the contact form. The connection on $S$ will be compatible
with the contact form when $\kappa_{a} = -\nu_{a}$. Given this, the induced
partial connection on $\bigodot^3\!S \cong \Wedge^0[0,3] \oplus \Wedge^0[1,2]
\oplus \Wedge^0[2,1] \oplus \Wedge^0[3,0] $ is
\[\textstyle
\nabla_{a}
\begin{bmatrix}
x \\
y \\
z \\
w
\end{bmatrix} =
\begin{bmatrix}
\nabla_{a}x \\
\nabla_{a}y \\
\nabla_{a}z \\
\nabla_{a}w
\end{bmatrix} +
\begin{bmatrix}
3\kappa_{a} & \lambda_{a} & 0 & 0 \\
3\mu_{a}& \kappa_{a}& 2\lambda_{a}& 0 \\
0 & 2\mu_{a} & -\kappa_{a} & 3\lambda_{a}\\
0 & 0 & \mu_{a}& -3\kappa_{a}
\end{bmatrix}\begin{bmatrix}
x \\
y \\
z \\
w
\end{bmatrix} \in \Wedge_H^1 \otimes \bigodot^3\!S.
\]
Rewriting the right hand side using the defining isomorphism
(\ref{defining_isomorphism}) yields the differential operator
$\bigodot^3\!S \to \Wedge_H^1 \otimes (E^*[2,2] \oplus E[1,1])$ given by
\[\nabla_{a}
\begin{bmatrix}
x \\
y \\
z \\
w
\end{bmatrix} = \begin{bmatrix}
(\nabla_{a}x + 3x\kappa_{a} +
y\lambda_{a})o_A-\frac{1}{\sqrt{3}}(\nabla_{a}y +
3x\mu_{a} + y\kappa_{a} + 2z\lambda_{a})\iota_A \\[5pt]
-\frac{1}{\sqrt{3}}(\nabla_{a}z +
2y\mu_{a} - z\kappa_{a} + 3w\lambda_{a})o^A + (\nabla_{a}w
+ z\mu_{a} - 3w\kappa_{a})\iota^A
\end{bmatrix}.\]

To calculate the induced operator $\Wedge_H^1 \to \Wedge_{H\perp}^2$ we may use
the canonical identification \mbox{$\Wedge_H^1 = E^* \oplus E[-1,-1]$} and then
project the right hand side above onto the direct sum
\[\Wedge^2E^*[2,2] \oplus (E^* \otimes E)_\circ[1,1] \oplus \Wedge^2E.\]
We write $\kappa_a = \big(\kappa_A, \bar{\kappa}^A\big) \in E^* \oplus E[-1,1]$ and so
on to denote the projections. The induced operator (pulled back via the
isomorphism (\ref{defining_isomorphism})) is therefore
\[\textstyle\bigodot^3\!S[-2,-2] \to \Wedge^2E^*
\oplus (E^* \otimes E)_\circ[-1,-1] \oplus \Wedge^2E[-2,-2]\]
given by
\begin{gather*}
 \begin{bmatrix}
 x \\
 y \\
 z \\
 w
 \end{bmatrix} \mapsto  \begin{bmatrix}
\big(\nabla_{A}x+ 3x\kappa_{A}+ y\lambda_{A}\big)o^{A}
- \frac{1}{\sqrt{3}}\big(\nabla_{A}y + 3x\mu_{A}+ y\kappa_{A}
+ 2z\lambda_{A}\big)\iota^{A} \vspace{2mm}\\
\Big( {-}\frac{1}{\sqrt{3}}\big(\nabla_{A}z + 2y\mu_{A} - z\kappa_{A}
+ 3w\lambda_{A}\big)o^B + \big(\nabla_{A}w + z\mu_{A}
- 3w\kappa_{A}\big)\iota^B \vspace{2mm}\\
- \big(\bar{\nabla}^Bx + 3x\bar{\kappa}^B + y\bar{\lambda}^B\big)o_A
+ \frac{1}{\sqrt{3}}\big(\bar{\nabla}^By + 3x\bar{\mu}^B + y\bar{\kappa}^B
+ 2z\bar{\lambda}^B\big)\iota_A \Big)\rule[-5pt]{0pt}{5pt}_{\circ}\vspace{2mm}\\
\frac{1}{\sqrt{3}}\big(\bar{\nabla}^{A}z + 2y\bar{\mu}^{A} - z\bar{ \kappa}^{A}
+ 3w\bar{\lambda}^{A}\big)o_{A} - \big(\bar{\nabla}^{A}w + z\bar{\mu}^{A}
- 3w\bar{\kappa}^{A}\big)\iota_{A}
\end{bmatrix}.
\end{gather*}
This is the operator that we should compare to the Rumin operator (pulled back
via the isomorphism (\ref{defining_isomorphism})), which can be written
\[{\rm d}_\perp \begin{bmatrix} x \\ y \\z \\w \end{bmatrix} =
\begin{bmatrix}
-\nabla^A(xo_A) + \frac{1}{\sqrt{3}}\nabla^A(y\iota_A)
+\Pi_A\big(w\iota^A-\frac{1}{\sqrt{3}}zo^A\big) \vspace{2mm}\\
\big(\nabla_A\big(w\iota^B-\frac{1}{\sqrt{3}}zo^B\big)
- \bar{\nabla}^B\big(xo_A-\frac{1}{\sqrt{3}}y\iota_A\big)\big)_\circ \vspace{2mm}\\
\bar{\nabla}_A\big(w\iota^A\big) - \frac{1}{\sqrt{3}}\bar{\nabla}_A\big(zo^A\big)
+ \Sigma^A\big(xo_A-\frac{1}{\sqrt{3}}y\iota_A\big)
\end{bmatrix}.\]
Insisting that these differential operators are equal (and hence that the
torsion vanishes) we obtain a system of twelve spinor equations
\begin{gather*}
\nabla^Ao_A = 3\kappa^Ao_A - \sqrt{3}\mu^A\iota_A,
\qquad \nabla^A\iota_A = -\sqrt{3}\lambda^Ao_A + \kappa^A\iota_A, \\	
\Pi_Ao^A = -2\lambda^A\iota_A \hspace{67pt} \Pi_A\iota^A = 0, \\
 \big(\bar{\nabla}^Bo_A\big)_\circ
 = \big({-} \sqrt{3}\bar{\mu}^B\iota_A + 3\bar{\kappa}^Bo_A\big)_\circ,\\
\big(\bar{\nabla}^B\iota_A\big)_\circ
 = \big({-}2\mu_Ao^B - \sqrt{3}\bar{\lambda}^Bo_A + \bar{\kappa}^B\iota_A\big)_\circ, \\
\big(\nabla_Ao^B\big)_\circ
 = \big({-}\kappa_Ao^B - \sqrt{3}\mu_A\iota^B - 2\bar{\lambda}^B\iota_A\big)_\circ, \\
\big(\nabla_A\iota^B\big)_\circ
 = \big({-} \sqrt{3}\lambda_Ao^B - 3\kappa_A\iota^B \big)_\circ, \\
\bar{\nabla}_Ao^A
 = - \bar{\kappa}_Ao^A - \sqrt{3}\bar{\mu}_A\iota^A,\qquad
\bar{\nabla}_A\iota^A
 = -\sqrt{3}\bar{\lambda}_Ao^A - 3\bar{\kappa}_A\iota^A, \\
\Sigma^A\iota_A = 2\bar{\mu}_Ao^A, \qquad \Sigma^Ao_A = 0 .
\end{gather*}
Contracting the middle four equations above with combinations of
$o_A$, $\iota^B$ and their counterparts with indices raised and lowered,
respectively, produces (together with the other eight) a system of twenty
independent linear equations over $\mathbb{R}$ in twelve unknowns $\kappa^A
o_A, \mu^A\iota_A, \dots $ (owing to the trace-free condition, the middle four
equations above yield three independent equations each). This system
is consistent if and only if the following eight obstructions vanish:
\begin{gather*}
\begin{split}
&\psi_0 = \Pi_A\iota^A,\\
&\psi_1 = \textstyle
\Pi_Ao^A - \frac{2}{\sqrt{3}}\iota^A\big(\nabla_A\iota^B\big)\iota_B,\\
&\psi_2 = \iota_A\big(\bar{\nabla}^Ao_B\big)o^B + o_A\big(\bar{\nabla}^Ao_B\big)\iota^B
 - \bar{\nabla}_Ao^A + 2o_A\big(\bar{\nabla}^A\iota_B\big)o^B, \\
&\psi_3 = \textstyle\iota_A\big(\bar{\nabla}^A\iota_B\big)o^B
 + o_A\big(\bar{\nabla}^A\iota_B\big)\iota^B - \frac{1}{3}\bar{\nabla}_A\iota^A
 + \frac{2}{3}\iota_A\big(\bar{\nabla}^Ao_B\big)\iota^B
 + \frac{2}{\sqrt{3}}o^A\big(\nabla_Ao^B\big)o_B,\\
&\psi_4 = \textstyle
 o^A\big(\nabla_Ao^B\big)\iota_B + \iota^A\big(\nabla_Ao^B\big)o_B - \frac{1}{3}\nabla^Ao_A
 + \frac{2}{3}o^A\big(\nabla_A\iota^B\big)o_B
 + \frac{2}{\sqrt{3}}\iota_A\big(\bar{\nabla}^A\iota_B\big)\iota^B, \\
&\psi_5 =
 o^A\big(\nabla_A\iota^B\big)\iota_B + \iota^A\big(\nabla_A\iota^B\big)o_B
 - \nabla^A\iota_A + 2\iota^A\big(\nabla_Ao^B\big)\iota_B , \\
&\psi_6 = \textstyle \Sigma^A\iota_A + \frac{2}{\sqrt{3}}o_A\big(\bar{\nabla}^Ao_B\big)o^B,\\
&\psi_7 = \Sigma^Ao_A.
\end{split}
\end{gather*}
The above eight functions vanish with the invariant torsion of the $G_2$
contact structure. One can check using the formul{\ae}
(\ref{full_transformation_law}) and~(\ref{full_barred_transformation_law}) that
these expressions are invariant under change of scale, as they should be.

This construction and resulting formul{\ae} apply in some generality
(locally all~$G_2$ contact structures arise this way \cite{Moy}). In
particular, they generalise \cite[equation~(32)]{EN2} in case that the
spin-frame $o_A$, $\iota^A$ arises from flying saucer
data~\cite[equation~(24)]{EN2} (precisely, with the notation from~\cite{EN2},
this means that $\iota^A=\pi^!\psi$ and $o_A=\Theta^{-1}\pi^!\phi$).

\subsection*{Acknowledgements}

We would like to thank all staff at SIGMA in Kyiv for their extraordinary
courage, continuing their work despite the shocking Russian invasion and
unconscionable aggression.

We would also like to thank the referees for their careful reading of our
manuscript and for their valuable suggestions and corrections.

\pdfbookmark[1]{References}{ref}
\LastPageEnding

\end{document}